\newcommand{\modelname}{\instancename}
\newcommand{\instancename}{\renewcommand{\algorithmcfname}{MODEL}}
\definecolor{WildStrawberry}{RGB}{255,67,164}
\newtheorem{theorem}{Theorem}[section]
\newtheorem{lemma}[theorem]{Lemma}
\newtheorem{claim}[theorem]{Claim}
\newtheorem{definition}[theorem]{Definition}
\newcommand{\set}[1]{\{#1\}}
\title{On the satisfiability of random $3$-SAT formulas\\with $k$-wise independent clauses}
\author{Ioannis Caragiannis \and Nick Gravin \and Zhile Jiang}
\date{}
\begin{document}

\maketitle

\begin{abstract}
    
     The problem of identifying the satisfiability threshold of random $3$-SAT formulas has received a lot of attention during the last decades and has inspired the study of other threshold phenomena in random combinatorial structures. The classical assumption in this line of research is that, for a given set of $n$ Boolean variables, each clause is drawn uniformly at random among all sets of three literals from these variables, independently from other clauses. Here, we keep the uniform distribution of each clause, but deviate significantly from the independence assumption and consider richer families of probability distributions. For integer parameters $n$, $m$, and $k$, we denote by $\DistFamily_k(n,m)$ the family of probability distributions that produce formulas with $m$ clauses, each selected uniformly at random from all sets of three literals from the $n$ variables, so that the clauses are $k$-wise independent. Our aim is to make general statements about the satisfiability or unsatisfiability of formulas produced by distributions in $\DistFamily_k(n,m)$ for different values of the parameters $n$, $m$, and $k$. 
     
     Our technical results are as follows: First, all probability distributions in $\DistFamily_2(n,m)$ with $m\in \Omega(n^3)$ return unsatisfiable formulas with high probability. This result is tight. We show that there exists a probability distribution~$\Distribution \in \DistFamily_3(n,m)$ with $m\in O(n^3)$ so that a random formula drawn from $\Distribution$ is almost always satisfiable. In contrast, for $m=\Omega(n^2)$, any probability distribution~$\Distribution \in \DistFamily_4(n,m)$ returns an unsatisfiable formula with high probability. This is our most surprising and technically involved result. Finally, for any integer $k\geq 2$, any probability distribution~$\Distribution \in \DistFamily_k(n,m)$ with $m=O(n^{1-1/k})$ returns a satisfiable formula with high probability.

\end{abstract}

\section{Introduction}
\label{sec:intro}
Satisfiability of propositional formulas (SAT) is one of the most renowned problems in theoretical computer science.
It appeared in the first lists of NP-complete problems independently proposed by Cook and Levin, and is pivotal for many developments
in modern complexity theory. Today, many lower bounds on the running time of algorithms rely on the Exponential Time Hypothesis for 
solving SAT \cite{Bringmann14, CNPPRW22, IP01, IPZ01}.  On the practical side, SAT solvers are frequently deployed in hardware circuit design, model checking, program verification, automated planning and scheduling, as well as in solving real-life instantiations of combinatorial optimization problems such as FCC spectrum auctions. Modern SAT solvers often find solutions to large industrial instances with thousands or even millions of variables despite the NP-hardness of the problem. However, there is still a large discrepancy between the performance of SAT solvers on those instances and theoretical average-case predictions, which have been studied in great depth under the line of research on \emph{random SAT}. 

\paragraph{Random SAT.} A $j$-CNF formula $\phi$ over $n$ variables is composed of $m$ OR-clauses, 
each containing exactly $j$ literals of $j$ different variables. In the most commonly studied random SAT model, 
a formula $\phi$ is generated uniformly at random from all possible $j$-CNF formulas over $n$ variables and 
$m$ clauses. The most prominent theoretical question related to random SAT is to identify the satisfiability threshold $r_j$
such that $\lim_{n\to \infty}\Prx{\phi \text{ is satisfiable}}$ is equal to $0$ when $m/n > r_j$, and equal to $1$ when $m/n<r_j$.
It has been established~\cite{CR1992} that $2$-SAT has $r_2=1$,  and its phase transition window~\cite{BBCKW2001} is $m\in[n-\Theta(n^{1/3}), n+\Theta(n^{1/3})]$. For $j\ge 3$, the asymptotic $j$-SAT threshold was shown to be $2^j\log 2 -\frac{1}{2}(1+\log 2)\pm o_j(1)$ as $j\to\infty$~\cite{Coja14} (improving previous results from~\cite{AP04}), while for large enough $j$ the exact value of $r_j$ was determined in~\cite{DSS2022}. However, the question of
identifying $r_j$ for small values of $j$ remains open. In particular, random $3$-SAT has attracted a lot of attention. 
For the lower bound part, it has been shown in a series of papers~\cite{CR1992,BFU1993,FS1996,Achlioptas2000,HS03,KKL06} that $r_3\ge 3.52$ (the currently best known bound is due to~\cite{HS03,KKL06}). The upper bound part is studied by \cite{FP1983,KMPS95,DBM2000,DKMP09}; the currently best known bound is $r_3< 4.49$ due to~\cite{DKMP09}. The estimate $r_3\approx 4.26$ was derived from numerical experiments~\cite{MPZ02} (see also~\cite{CKT1991,MSL1992}).

A more recent line of work~\cite{FKRRS2017, FR2018, FR2019, OB2019} extends the standard model of random $j$-SAT to non-uniform distributions.
Their motivation comes from the empirical observation that, in practice, CNF formulas often have rather different frequencies/probabilities for the $n$ variables to appear in each clause (following a power-law distribution instead of a uniform one). Namely, Friedrich and  Rothenberger~\cite{FR2019} proposed a non-uniform random model, where the literals $\{x_i,\overline{x_i}\}_{i\in [n]}$ are  selected independently at random in each clause $c$ of the random $j$-CNF with $\Prx{x_i\in c}=\Prx{\overline{x_i}\in c}=p_i$ and where probabilities $\vect{p}=(p_i)_{i\in [n]}$ may vary across different variables. They find satisfiability threshold $r_2(\vect{p})$  of non-uniform random $2$-SAT for certain regimes depending on $\vect{p}$. However, the non-uniform model of~\cite{FR2019} does not capture the community biases/correlations (i.e., the fact that certain variables are more likely to appear together in a clause), which are often observed in practice \cite{AGL12}. This leads us to the question of whether it is possible to relax the strong independence assumption in the existing random SAT literature.  

\paragraph{Relaxation of independence.}  We first observe that it does not make much sense to study distributions of SAT formulas with arbitrary correlations over the clauses. Indeed, by allowing correlation between several clauses, one may enforce that the random formula $\phi$ contains large fixed sub-formulas corresponding to NP-hard SAT variants. This would be at odds with our goal 
of studying average-case complexity. Therefore, we must keep a certain degree of independence in the distribution of instances. We propose to consider the relaxation of mutual independence over $m$ clauses in a random formula $\phi$ \emph{to $k$-wise independence} for a small constant $k$. To keep the new model tractable, we focus on $3$-SAT and uniform distribution of literals within each clause. I.e., we assume that (i) every $3$-OR-clause $c$ of a random $3$-CNF formula $\phi$ has three literals of three distinct variables drawn uniformly at random among all such triplets of literals and that (ii) given this marginal distribution of each 
clause $c\sim\Dmarginuni$, the distribution $\Distribution$ over the clause set $\ClauseSet$ in $\phi$ is only $k$-wise independent instead of the mutually independent distribution $\Distind=\left(\Dmarginuni\right)^{\otimes m}$ in the standard model. This is a natural generalization that has been considered in a number of different settings but, to the best of our knowledge, not in the context of random SAT. Note that the smaller $k$ is, the bigger the set of possible distributions $\Distribution$. 
Furthermore, for small values of $k$, a $k$-wise independent distribution $\Distribution$ can still capture a large class of dependencies among clauses but at the same time does not allow correlation between any $k$-tuples of clauses. 
In mathematical terms, the family of discrete $k$-wise independent distributions naturally appears when we map the set of distributions to the set of their low-degree moments. Specifically, if a distribution $\Distribution$ is supported on the $n$-dimensional binary cube\footnote{Similar moment functions can be defined for any distribution with discrete marginals.} $\texttt{supp}(D)=\{-1,1\}^{n}$, then all its moments of degree up to $k$ can be described as $\mu(D)=(\Ex{\prod_{i\in S}x_i})_{|S|\le k}$. 
As low-degree moments (basically, the image of $\mu$) are extremely important in statistical analysis, it is equally important to study the kernel of the aforementioned mapping, which exactly corresponds to the family of $k$-wise independent distributions. 
Let us provide additional justifications of our framework by discussing some of the theoretical work on random $3$SAT and on other settings with similar $k$-wise independence relaxation.

\begin{description}
\item[Pseudo-randomness.] Historically, the $k$-wise relaxation of independence has been actively used in the literature on 
		derandomization and pseudo-randomness, as it allows to significantly reduce the amount of random bits needed to generate random objects. 
		For example, Alon and Nussboim~\cite{AlonN08} consider random Erd\H{o}s-R\'{e}nyi graphs and examine the minimal degree $k$ of independence 
		needed to achieve a variety of graph properties and statistics (such as connectivity, existence of perfect matchings, 
		existence of Hamiltonian cycles, clique and chromatic numbers, etc.) that match those in the mutually independent case. Benjamini et al.~\cite{BenjaminiGP12} consider similar questions for monotone boolean functions. The motivation in~\cite{AlonN08} comes from the fact that there are efficient constructions of $k$-wise independent distributions with ``low degree of independence'' (say $k=O(\log n)$) that utilize only $\polylog(n)$ random bits, i.e., much fewer than the polynomial number of random bits required to generate mutually independent distributions. While some of this motivation can be applied to our setting of random $3$-SAT, it is a conceptually different story. Indeed, the perspective of pseudo-random generation is through the lenses of ``probability theory'', where one controls the distributions and can simply choose one that satisfies necessary conditions such as, e.g., $(\log n)$-wise independence. On the other hand, our motivation stems from ``statistics'', as our ideal model should have a reasonable fit to empirical observations. So, we would like to use as minimal assumptions as possible and study small (constant) degrees of independence.
\item[Refutability of $3$-SAT.] While the research on lower bounds for random $3$-SAT often comes up with certain simple heuristics that efficiently find a satisfying assignment (see, e.g., the surveys by Achlioptas~\cite{A21} and Flaxman~\cite{Flaxman16}), it is extremely hard to find an efficient refutation of a unsatisfiable $3$-SAT formula.
		Indeed, the common approach to refute a given SAT formula is proof in resolution. Chvatal and Szemeredi~\cite{ChvatalS88} first showed that a random $3$-CNF formula with $m=\Theta(n)$ clauses (which is almost surely unsatisfiable) almost surely admits only exponential size proof in resolution. Later, Ben-Sasson and Wigderson~\cite{BenSassonW01} derived similar result for much larger $m=O(n^{3/2-\eps})$. On the positive side, \cite{FriedmanGK05} gave the first polynomial time algorithm via spectral techniques that almost surely\footnote{Refutation in this case is an algorithm with one-sided error: it always 
		refutes the formula correctly by producing certain certificates, or says that the formula might be correct.} refutes a random $3$-SAT formula with $m=n^{3/2+\eps}$ clauses. The best known bound on $m$ is due to Feige and Ofek~\cite{FeigeO07} who proved that, for a sufficiently large constant $c$, random $3$-SAT formulas 
		with $m=c\cdot n^{3/2}$ clauses can be almost surely refuted in polynomial time using another spectral graph algorithm. We note that a similar situation (extremely high probability of unsatisfiability for a random formula and inability to efficiently confirm it) is unlikely to happen in our $k$-clause independent model for constant $k$. Indeed, the main proof approach for dealing with arbitrary $k$-wise independent distribution is to define a $k$-wise statistic, which differentiates any satisfiable formula from a typical unsatisfiable one.
\item[Testing $k$-wise independence.] The property of $k$-wise independence of a distribution with $n$ components can be tested using $n^{O(k)}=poly(n)$ many samples in polynomial time, when $k$ is a constant~\cite{AAKMRX07, RX10}. This is a useful property to have, as it allows one to verify with only polynomially many instances of random $3$-SAT, whether these instances conform to $k$-wise independence or not.
\item[Robust mechanism design.] A recent line of work in robust mechanism design also considers families of $k$-wise independent Bayesian priors in single and multi-unit auctions \cite{CGLW21, DKP24, GW24, GHKL24}. 
Their motivation is similar to ours, as they also rely on the statistical point of view to justify the extension of the results for mutual independent priors typically assumed in Bayesian mechanism design to $k$-wise independent ones. 		 		
\end{description}

\subsection{Problem formulation}
\label{sec:formulation}
We consider random $3$-CNF formulas with $n$ variables generated from a distribution $\Distribution$ over $m$ clauses, where the mutual independence assumption over clauses is relaxed to $k$-wise independence. We use the term $k$-clause independence to refer to such distributions. We denote such families of distributions by $\DistFamily_k(n,m)$, where each $\Distribution\in\DistFamily_k(n,m)$ has identical marginals uniformly distributed over all possible OR-clauses and those marginals are only assumed to be $k$-wise independent in $\Distribution$. We would like to understand the following question for small values of $k$:
\begin{quote}
How does the satisfiability threshold $r_3$ of random $3$-SAT formulas behave under any $k$-clause independent distribution $\Distribution\in\DistFamily_k(n,m)$?
\end{quote}
As the distribution $\Distribution$ is not unique, there might be a large gap between lower and upper estimates of $r_3$. To this end, we formally define the \emph{lower satisfiability threshold} $\LST_k(n)$ as an upper bound on $m$, such that a random formula $\phi$ drawn from a distribution in $\DistFamily_k(n,m)$ with $m\le\LST_k(n)$ clauses has $\Prx{\phi \text{ is satisfiable}}\ge\frac{2}{3}$. Similarly, the \emph{upper satisfiability threshold} $\UST_k(n)$ is a lower bound on $m$, such that the random formula $\phi$ with $m\ge\UST_k(n)$ clauses has $\Prx{\phi \text{ is satisfiable}}\le\frac{1}{3}$. What kind of bounds on upper $\UST_k(n)$ and lower $\LST_k(n)$ thresholds should we expect?  

\paragraph{Reasonable expectations.}
The condition $\Distribution\in\DistFamily_k(n,m)$ only says something about configurations of at most $k$ clauses and does not put any other restrictions on the random formula $\phi\sim\Distribution$. As the degree of independence $k$ is a small constant, any argument that gives
bounds on $\LST_k$ or $\UST_k$ can only rely on statistics of at most a constant number of clauses. Hence, it is rather likely that  
bounds on $\LST_k$ and $\UST_k$ come together with efficient procedures of, respectively, finding a satisfying assignment for a random formula $\phi$, or certifying that $\phi$ is not satisfiable. Hence, given the prior work on random $3$-SAT for $\Distind\in\DistFamily_k(n,m)$, we get the following picture:
\begin{description}
    \item[Upper satisfiability threshold.] The best known result for refuting $3$-CNF formulas efficiently is due to Feige and Ofek~\cite{FeigeO07}, who show how to do it only for a fairly large number of clauses $m=c\cdot n^{3/2}$. Furthermore, for any smaller number of clauses $m=O(n^{3/2-\eps})$, a random $3$-CNF formula is likely to have only exponential in $n$ proof size for any unsatisfiability proof in resolution~\cite{BenSassonW01}.
    Hence, it is out of reach to aim for a better bound on $\UST_k(n)$ than $O(n^{3/2})$ while relying only on $k$-wise independence for some constant $k$. In fact, the best known positive result on \emph{efficiently computable proofs} of unsatisfiability in resolution is due to Beame et al.~\cite{BKTS02}, who show that an ordered DLL algorithm executed on a random $3$-SAT instance with $m=\Omega(n^2/\log n)$ clauses terminates in polynomial time.
     \item[Lower satisfiability threshold.] As the proofs for the lower bounds on $r_3$ often establish simple procedures that find satisfying assignments with high probability, it is still possible that $\LST_k(n)$ is of similar order $\Theta(n)$ as the lower bounds on $r_3$ for $\Distind$. Thus, the most ambitious result would be to show that $\LST_k(n)\le c_k\cdot n$ for constant $c_k$ that increases with $k$. A more modest goal is to aim for $\LST_k(n)=o(n)$ for a constant $k$, where $\LST_k(n)\to \Theta(n)$ as $k\to+\infty$. 
\end{description}    

\subsection{Our results}
\label{sec:results}
We obtain the following bounds on the lower and upper satisfiability thresholds $\LST_k(n)$ and $\UST_k(n)$ for various values of $k$.

\paragraph{Lower satisfiability thresholds.} We show (in  \textbf{Theorem~\ref{thm:lower-bound-k-wise-unsat}}) that $\LST_k(n)\ge \Omega(n^{1-1/k})$ for any $k\ge 2$. I.e., any $k$-clause independent random formula is satisfiable with high probability if it contains at most $O(n^{1-1/k})$ clauses. 
The argument is simple: for any $k$-clause independent distribution, we look at the 
$3$-uniform hypergraph that corresponds to the variables of a random formula $\phi$ produced according to this distribution, and argue that this graph does not have Berge-cycles, with high probability. We also provide an informal justification that this bound is asymptotically tight, i.e., that $\LST_k(n)=O(n^{1-1/k})$.
Specifically, we outline a plausible approach for constructing a $k$-clause independent distribution with $m=O(n^{1-1/k})$ clauses such that most of its formulas are unsatisfiable. Our approach is built upon existing constructions of dense hyper-graphs with large girth. It is interesting to note that, in both the proof of the $\LST_k(n)=\Omega(n^{1-1/k})$ result and the approach for showing that $\LST_k(n)=O(n^{1-1/k})$, we only need to consider variables and can completely ignore the distribution over the literals. 

\paragraph{Upper satisfiability thresholds.} We first consider small degrees of independence, i.e., $k\in \{2,3\}$. In both cases, we show that $\UST_k(n)=\Theta(n^3)$, meaning that one needs almost all possible clauses in a $3$-clause (as well as $2$-clause) independent formula to ensure that it is unsatisfiable (see \textbf{Theorem~\ref{thm:tight-result-2wise}} and \textbf{Theorem~\ref{thm:3wise_tightness}}). 
The most nontrivial part is to construct the distribution $\Distribution\in\DistFamily_3(n,m)$ with $m=\Theta(n^3)$ and $\Prx{\phi \text{ is satisfiable}}\ge\frac{2}{3}$. Our construction is based on ``$3$-XOR formulas'' (i.e., OR-clauses that have either one or three literals that are satisfied by a randomly planted truth assignment), which aligns well with the 
intuition developed in previous work~\cite{Feige02,FeigeO07}. The main technical difficulty is to ensure $k$-clause independence by adding a small fraction of unsatisfiable instances and checking all $3$-wise statistics.

Our most exciting and technically involved result (see \textbf{Theorem~\ref{thm:upper-bound-result-4wise}}) is our proof that $\UST_4(n)=O(n^2)$, i.e., a random formula $\phi\sim\Distribution$ with $m=O(n^2)$ clauses is unsatisfiable with large probability for any $4$-clause independent distribution $\Distribution\in \DistFamily_4(n,m)$. It is worth noting that such a bound is much harder to get under the $4$-wise independence assumption than in the case of a mutually independent distribution $\Distind$.
Indeed, Feige and Ofek~\cite{FeigeO07} describe a very simple refutation algorithm for $m=\Theta(n^2)$ that fixes a variable $x$ and considers all clauses containing $x$ or $\overline{x}$ (there will be $\Theta(n)$ such clauses in expectation). Then, after deleting $x$ (or $\overline{x}$), one can reduce the problem to the refutation of the respective random $2$-CNF sub-instance, which can be easily verified in polynomial time and has a low satisfiability threshold of $r_2=1$. This simple approach obviously fails for $4$-clause independent distributions. We instead construct a bipartite multigraph $G(\phi)$ between pairs of distinct literals on one side and all singleton literals on the other, in which every OR-clause in $\phi$ corresponds to three different edges. We then carefully examine the statistic $\kappa(\phi)$ that counts $K_{2,2}$ subgraphs in $G(\phi)$ for a random $\phi\sim\Distribution$. We find that the expected value of $\kappa(\phi)$ for random $\phi$ is only slightly larger than its absolute minimal value, while at the same time $\kappa(\phi)$ is significantly larger than its expectation when $\phi$ is satisfiable.
Our argument bears certain similarities with the argument in~\cite{FeigeO07}, which also looked at intersections of two literals between pairs of clauses but used the $3$-XOR principle and 
 a differently constructed non-bipartite graph.

\subsection{Roadmap}
The rest of the paper is structured as follows. We begin with preliminary definitions and notation in Section~\ref{sec:prelim}. Then, we warm up with our tight bounds on the upper satisfiability threshold $\UST_2(n)$ in Section~\ref{sec:LargeSat-2}. Our lower bound on $\UST_3(n)$ is proved in Section~\ref{sec:LargeSat-3} while our upper bound on $\UST_4(n)$ follows in Section~\ref{sec:LargeSat-4}. Section~\ref{sec:SmallUnsat-proof} is devoted to the study of the lower satisfiability threshold.

\section{Preliminaries}
\label{sec:prelim}
    Let $x_1$, $x_2$, $\cdots$, $x_n$ be $n$ boolean variables. A literal $\ell$ is a boolean variable or the negation of it. For convenience, we usually represent a literal as a variable-sign pair, i.e. $\ell = (x_i,s)$, where $s$ is the positive sign $\Pos$ if the literal is a boolean variable and the negative sign $\Neg$ if it is its negation. We define $\SignSet(n)=\set{\Pos,\Neg}^n$. 
An instance of the \emph{Satisfiability} problem in conjunctive normal form (or \emph{SAT instance}, for short) is a boolean formula over a subset of the $n$ variables which is a conjunction of disjunctive clauses, each clause containing literals with different variables. In a $3$-SAT instance, every clause has exactly $3$ literals. 
Each SAT instance $\ClauseSet$ can be described as a multiset of clauses. We denote the size of an instance $\ClauseSet$ as $m=|\ClauseSet|$. 

Given the number of variables $n$, let $X(n)$ be the set of variables. Let $\TripletVarSet(n)$ be the set of all unordered triplets of variables $\TripletVarSet(n)=\set{(x_i,x_j,x_k) ~|~ x_i,x_j,x_k\in X(n), i<j<k}$ and  $\ClauseTypeSet(n)$ be the set of all possible clauses, i.e., all triplets with literals of different variables. 
We will usually omit the dependency of $X, \TripletVarSet,$ and $\ClauseTypeSet$ on $n$, when the value of $n$ is clear from the context. 
To refer to the variable (the set of variables) or the sign (the set of signs) of a literal (a clause $c\in\ClauseSet$), we define operators $\GetVariable(\cdot)$ and $\GetSign(\cdot)$, so that if, e.g., $c = ((x_1,\Pos),(x_2,\Neg),(x_3,\Pos))$, then $\GetVariable(c)=(x_1,x_2,x_3)$ and $\GetSign(c)=(\Pos, \Neg, \Pos)$. 

A \emph{truth assignment} is a vector $\sigma \in \set{0, 1}^n$, where $1$ or $0$ at the $\sigma_i$ coordinate corresponds to the ``true'' or ``false'' value of $x_i$, respectively. A truth assignment $\sigma$ is \emph{satisfying} for an instance $\ClauseSet$ when each clause $c\in\ClauseSet$ has at least one true literal under $\sigma$. An instance $\ClauseSet$ is \textit{satisfiable} if there exists a satisfying assignment; otherwise, $\ClauseSet$ is \textit{unsatisfiable}.  

The random $3$-SAT model assumes that the instance is drawn from a probability distribution over all possible $3$-SAT instances with $m$ clauses and $n$ variables. The main object of study in such a model is the probability of such a random instance being satisfiable as a function of $n$ and $m$.

\subsection{Random $3$-SAT without mutual independence}
In the standard random $3$-SAT model, each clause is drawn uniformly at random among all clauses in $\ClauseTypeSet(n)$, independently from the other clauses. A constructive definition is given by Model~\ref{model:independent-sat}. We denote the distribution over instances generated by Model~\ref{model:independent-sat} as $\Distribution_{\text{Ind.}}$. 

\modelname
\begin{algorithm}
{\bf Input:} Integers $n\geq 3$ and $m\geq 1$.

{\bf Output:} A $3$-SAT instance with $n$ variables and $m$ clauses.

\caption{Selects a $3$-SAT instance uniformly at random from all possible instances}\label{model:independent-sat}

\begin{algorithmic}[1]
    \STATE $C\gets \emptyset$;
    \FOR{$l \gets 1, ..., m$}
        \STATE pick a literal triplet $(\ell_1,\ell_2,\ell_3)$ uniformly at random from $\ClauseTypeSet(n)$;
        \STATE $c\gets (\ell_1,\ell_2,\ell_3)$;
        \STATE $C\gets C\cup \{c\}$;
    \ENDFOR
    \RETURN $C$
\end{algorithmic}
    
\end{algorithm}

Our main focus will be on $k$-wise relaxations of the independent distribution.
\begin{definition}
[$k$-clause independent random SAT]
A distribution $\Distribution$ for selecting a random SAT instance is $k$-clause independent if the set of any fixed $k$ clauses $S\subseteq \ClauseSet$ is distributed uniformly at random over all $k$-tuples of possible clauses, i.e., 
    \begin{align*}
    \Prlong[\ClauseSet\sim\Distribution]{c_i=t_i, \forall i\in[k]}= \Prlong[\ClauseSet\sim\Distribution_{\text{Ind.}}]{c_i=t_i, \forall i\in[k]}=\prod_{i\in[k]} \Prx{c_i=t_i}.
    \end{align*}
    for every $c_1,\ldots,c_k\in \ClauseSet$ and $t_1,\ldots,t_k\in \ClauseTypeSet$.
    We denote the family of all $k$-clause independent distributions with $m$ clauses over $n$ variables as $\DistFamily_k(n,m)$.
\end{definition}
We remark that, in contrast to the random $3$-SAT model (Model~\ref{model:independent-sat}), which defines a single distribution for given $n$ and $m$, the family $\DistFamily_k(n,m)$ contains many different distributions.

By definition, the probability of any event $A_S$ that depends only on a subset $S$ of at most $k$ clauses in the $k$-clause independent distribution $\Distribution$ is the same as for $\Distribution_{\text{Ind.}}$. I.e., the expectations of the indicator function $\ind{A_S}$ are the same for $\Distribution$ and $\Distribution_{\text{Ind.}}$. Hence, by linearity of expectation, any statistic that involves only $k'\le k$ clauses must be the same for $\Distribution$ and  $\Distribution_{\text{Ind.}}$, i.e., 
\begin{align}
    \label{eq:exp-between-k-wise-and-ind.}
    \Exlong[\ClauseSet \sim \Distribution]{\sum_{S \subseteq \ClauseSet,\atop |S|=k'}\ind{A_{S}}}=\Exlong[\ClauseSet\sim \Distribution_{\text{Ind.}}]{\sum_{S\subseteq \ClauseSet,\atop |S|=k'}\ind{A_{S}}}.
\end{align}

We would like to understand what are the largest/smallest possible number of clauses for a random $3$-SAT formula to be almost surely satisfiable/unsatisfiable under any $k$-clause independent distribution. Using $SAT(\ClauseSet)$ to denote the event that the SAT instance $\ClauseSet$ is satisfiable, we define the following satisfiability ``thresholds''.

\begin{definition}[\LargestSizeOfSatisfiableInstance]
\label{def:LargestSat}
The upper satisfiability threshold function $\UST_k(n)$ is defined as follows. For integer $n\geq 3$, $\UST_k(n)$ is the minimum integer $m$ such that 
$$\Prlong[\ClauseSet \sim \Distribution]{SAT(\ClauseSet)} \le 1/3,~~\forall \Distribution \in \DistFamily_k(n,m).$$
\end{definition}

\begin{definition}[\SmallestSizeOfUnsatisfiableInstance]
\label{def:SmallestUnsat}
The lower satisfiability threshold function $\LST_k(n)$ is defined as follows. For integer $n\geq 3$, $\LST_k(n)$ is the maximum integer $m$ such that 
$$\Prlong[\ClauseSet \sim \Distribution]{\neg SAT(\ClauseSet)} \le 1/3,~~\forall \Distribution \in \DistFamily_k(n,m).$$
\end{definition}
Extending the line of research on the standard random $3$-SAT model, we would like to have as tight estimates of $\UST_k(n)$ and $\LST_k(n)$ as possible.

\section{Tight bounds for the upper satisfiability threshold $\UST_2(n)$}
\label{sec:LargeSat-2}
        We begin with a technical warm up and prove asymptotically tight bounds on the upper satisfiability threshold of $2$-clause independent random $3$-SAT.
\begin{theorem}
    $\UST_2(n)=\Theta(n^3)$.
    \label{thm:tight-result-2wise}
\end{theorem}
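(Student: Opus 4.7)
The theorem $\UST_2(n) = \Theta(n^3)$ requires a matching upper bound $\UST_2(n) = O(n^3)$ and a lower bound $\UST_2(n) = \Omega(n^3)$, and I would address the two directions separately.

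For the upper bound, the plan is to show that whenever $m \ge C n^3$ for a sufficiently large absolute constant $C$, every distribution $\Distribution \in \DistFamily_2(n,m)$ yields an unsatisfiable formula with probability at least $2/3$. The key structural observation is that any triplet of variables $T = (x_i, x_j, x_k)$ whose eight sign-patterns all appear as clauses of $\ClauseSet$ already contains an unsatisfiable subformula, so it suffices to prove that with probability at least $2/3$ some such ``complete'' triplet exists. For each clause $t \in \ClauseTypeSet$ let $N_t$ denote its multiplicity in $\ClauseSet$. Under $2$-clause independence, $\Ex{N_t} = m/|\ClauseTypeSet|$ and the pairwise covariances of the indicators $\ind{c_i = t}$ vanish, so $\mathrm{Var}(N_t) \le \Ex{N_t}$. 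Chebyshev's inequality then gives $\Prx{N_t = 0} \le 1/\Ex{N_t} = |\ClauseTypeSet|/m$. Letting $M = |\{t \in \ClauseTypeSet : N_t = 0\}|$ be the number of missing sign-patterns, summing over the $|\ClauseTypeSet| = 8\binom{n}{3} = \Theta(n^3)$ clauses gives $\Ex{M} \le |\ClauseTypeSet|^2/m = O(n^6/m)$. Every triplet that fails to be complete is charged to at least one missing sign-pattern, so the number of such triplets is bounded by $M$; Markov's inequality then yields $\Prx{\text{every triplet fails}} \le \Ex{M}/\binom{n}{3} = O(n^3/m)$, which is at most $1/3$ once $C$ is large enough.

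For the lower bound, I need to exhibit a distribution $\Distribution \in \DistFamily_2(n,m)$ with $m = \Theta(n^3)$ and $\Prlong[\ClauseSet \sim \Distribution]{SAT(\ClauseSet)} > 1/3$. The natural candidate is a mixture: with some weight $p$ (to be chosen close to $7/8$), use a ``planted-satisfiable'' component in which a uniformly random truth assignment $\sigma \in \set{0,1}^n$ is drawn and each $c_i$ is sampled independently and uniformly from the $7\binom{n}{3}$ clauses satisfied by $\sigma$; with the remaining weight $1-p$, use a corrective component chosen so that the overall mixture has uniform marginals and satisfies the $2$-clause-independence condition on every pair of clauses. A direct calculation shows that the planted component already has uniform marginals and exact $2$-clause independence on pairs of clauses on disjoint variable triplets, and deviates from the independent case only by small multiplicative factors ($48/49$ for distinct same-triplet pairs and $8/7$ for coincident clauses) on pairs sharing a triplet. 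The corrective distribution therefore need only adjust these same-triplet statistics, and solving the resulting linear constraints yields a valid probability distribution for every $p \le 7/8$. Conditioning on the planted component being selected, the realized formula is satisfied by the planted $\sigma$, giving $\Prx{SAT(\ClauseSet)} \ge p > 1/3$.

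The main obstacle will be the lower-bound construction. The exact $2$-clause-independence condition is rigid, so the small same-triplet deviations of the planted component have to be cancelled perfectly by the correction, and the correction has to be realized as a valid joint distribution over $m$-tuples of clauses (not just a valid pairwise specification). I expect the bulk of the technical work to lie in writing down this correction explicitly, verifying its nonnegativity and consistency with uniform marginals, and checking that a constant planting weight $p > 1/3$ is admissible. The upper-bound direction, by contrast, is essentially a one-line second-moment calculation powered entirely by the vanishing pairwise covariances.
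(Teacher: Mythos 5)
Your upper-bound direction is correct, and it takes a genuinely different route from the paper's. The paper fixes the pairwise collision statistic $\xi(\ClauseSet)=\sum_{i<j}\ind{c_i=c_j}$, whose expectation is pinned to its i.i.d.\ value by $2$-clause independence, and plays it against two convexity lower bounds (one unconditional, one under $SAT(\ClauseSet)$); you instead apply Chebyshev to each clause-type count $N_t$ (legitimate, since the variance involves only pairs of clauses) and then Markov to the number of missing types. Both arguments hinge on the same structural fact -- a satisfiable instance must omit at least $\binom{n}{3}$ of the $8\binom{n}{3}$ clause types, one per variable triplet -- and both give $\Prx{SAT(\ClauseSet)}=O(n^3/m)$; yours is a clean alternative, the paper's yields the explicit constant $56\binom{n}{3}/m$.

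The lower-bound direction, however, has a genuine gap: the ``direct calculation'' you invoke for the planted component is false for pairs of clauses whose variable triplets are distinct but overlapping. If each clause is drawn uniformly from the $7\binom{n}{3}$ clauses satisfied by the planted $\sigma$, then for types $t_1,t_2$ the joint probability is $\Pr_{\sigma}[\text{both satisfied}]/\bigl(49\binom{n}{3}^2\bigr)$, and by inclusion--exclusion $\Pr_{\sigma}[\text{both satisfied}]=\tfrac34+\Pr_{\sigma}[\text{both falsified}]$. The latter equals $1/64$ only when the triplets are disjoint; when they share one or two variables it is $0$, $1/32$, or $1/16$ according to whether the two falsifying patterns agree on the shared variables, so $\Pr_{\sigma}[\text{both satisfied}]$ is $48/64$, $50/64$, or $52/64$ rather than the $49/64$ required for exact pairwise independence. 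Thus the deviations are not confined to same-triplet pairs: they sit on events of probability $\Theta(1/n)$, your corrective component must repair this much larger family of pairwise constraints (and $p\le 7/8$ is not the right feasibility bound), and the existence of a joint distribution over $m$-tuples realizing the corrected pairwise marginals -- which you explicitly defer -- is precisely the crux. The paper avoids the problem by changing the planted component itself: clauses are matched bijectively to \emph{distinct} variable triplets, and each clause's signs are drawn uniformly among the four patterns whose parity matches $\sigma$ on its triplet (a $3$-XOR rule), not among the seven satisfying patterns. Since any two distinct triplets each contain a private variable, the uniformity of $\sigma$ on that variable makes the sign triplets exactly uniform and pairwise independent across distinct triplets, leaving only the identical-variable-triplet statistic to be matched, which is done by mixing in the single-triplet distribution $\univar$ with probability $\binom{n}{3}^{-1}$. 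Without such a device, or an actual existence proof for your correction, your construction does not establish $\UST_2(n)=\Omega(n^3)$.
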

Theorem~\ref{thm:tight-result-2wise} follows by the next two lemmas. Lemma~\ref{lem:upper-bound-pairwise-sat} provides an upper bound on $\UST_2(n)$. In the proof, we introduce the technique that we will use later in Section~\ref{sec:LargeSat-4} to get a much more involved upper bound on $\UST_4(n)$. 
\begin{lemma}
    For any $\Distribution\in\DistFamily_2(n,m)$ with $m\ge 56\binom{n}{3}$, $\Prx[\ClauseSet\sim\Distribution]{SAT(\ClauseSet)}\le{56\binom{n}{3}}/{m}$.
    \label{lem:upper-bound-pairwise-sat}
\end{lemma}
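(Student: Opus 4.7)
The plan is to apply Markov's inequality to a shifted pair-coincidence statistic. Define
\[
X(\ClauseSet) \defeq \bigl|\{(i,j) : 1 \le i \ne j \le m,\ c_i = c_j\}\bigr|,
\]
the number of ordered pairs of equal clauses. Since $X$ is a sum of indicators over \emph{pairs} of positions, the 2-wise independence hypothesis fixes its expectation exactly: each ordered pair $(c_i,c_j)$ with $i\ne j$ is uniform on $\ClauseTypeSet(n)^2$, and $|\ClauseTypeSet(n)|=8\binom{n}{3}$, so for every $\Distribution\in\DistFamily_2(n,m)$,
\[
\Exlong[\ClauseSet\sim\Distribution]{X(\ClauseSet)} \ =\ \frac{m(m-1)}{8\binom{n}{3}}.
\]

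Next I will rewrite $X$ in a form convenient for lower bounds. Letting $m_{T,s}$ be the number of positions $i\in[m]$ with $\GetVariable(c_i)=T$ and $\GetSign(c_i)=s$, one has $X=\sum_{T\in\TripletVarSet(n)}\sum_{s\in\{\Pos,\Neg\}^3}m_{T,s}(m_{T,s}-1)$. Two Cauchy--Schwarz applications -- first over the $8$ sign patterns at each fixed triple, then over the $\binom{n}{3}$ triples -- give the deterministic bound $X(\ClauseSet) \ge m^2/(8\binom{n}{3}) - m$, which I will call $m_0$, holding for every $m$-clause instance.

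The crux of the argument is that if $\sigma$ satisfies $\ClauseSet$, then on each triple $T$ one specific ``forbidden'' sign pattern (the one whose three literals are all false under $\sigma$) cannot appear; hence at most $7$ of the $8$ sign patterns per triple carry any mass. Repeating the Cauchy--Schwarz calculation with $8$ replaced by $7$ yields the stronger bound $X(\ClauseSet) \ge m^2/(7\binom{n}{3}) - m$, which I will call $S$, on every satisfiable instance. Applying Markov to the nonnegative random variable $X - m_0$ then gives
\[
(S-m_0)\cdot\Prlong[\ClauseSet\sim\Distribution]{SAT(\ClauseSet)} \ \le\ \Exlong[\ClauseSet\sim\Distribution]{X-m_0},
\]
and the brief computations $\Ex{X}-m_0 \le m$ and $S-m_0 = m^2/(56\binom{n}{3})$ deliver $\Prx{SAT(\ClauseSet)}\le 56\binom{n}{3}/m$, as claimed.

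The hard part is the choice of statistic. Its expectation sits only an additive $O(m)$ above its deterministic minimum, yet satisfiability pushes it up by a full multiplicative factor of $8/7$; the Cauchy--Schwarz gap $\tfrac{1}{7}-\tfrac{1}{8}=\tfrac{1}{56}$ is exactly what produces the constant $56$. The hypothesis $m\ge 56\binom{n}{3}$ in the lemma is only there to make the conclusion nontrivial. I expect the same ``small expected excess, large jump on satisfiable instances'' template to underlie the later $K_{2,2}$-based argument for $\UST_4(n)$.
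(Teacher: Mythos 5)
Your proposal is correct and follows essentially the same route as the paper: the paper's statistic $\xi(\ClauseSet)$ is your $X$ up to a factor of $2$, its expectation is pinned by $2$-clause independence, the two convexity lower bounds (all $8\binom{n}{3}$ cells versus the $7\binom{n}{3}$ cells surviving a satisfying assignment) match the paper's $\LowerBound{\xi}$ and $\LowerBound{\xi~|~SAT}$, and your Markov step on $X-m_0$ is the same inequality as the paper's conditional decomposition, yielding the identical constant $56$.
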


\begin{proof}
Let $\xi(\ClauseSet) \eqdef \sum_{c_i,c_j\in \ClauseSet}\ind{c_i=c_j}$ be the number of identical clause pairs in the instance $\ClauseSet$. On the one hand, Equation~\eqref{eq:exp-between-k-wise-and-ind.} implies that the expectation of $\xi(\ClauseSet)$ is the same when $\ClauseSet$ is drawn from a $2$-clause independent distribution and $\distind$. On the other hand, if an instance $\ClauseSet$ has a satisfying assignment, at least $1/8$ of possible clauses from $\ClauseTypeSet$ must not appear in $\ClauseSet$, which means that the value of $\xi(\ClauseSet)$ is significantly higher than its expectation. These two observations will allow us to get the desired bound on the probability $\Prx[\ClauseSet\sim\dist]{SAT(\ClauseSet)}$. 

Let $\Distribution\in \DistFamily_2(n,m)$ and define $p\eqdef\Prx[\ClauseSet\sim\dist]{SAT(\ClauseSet)}$. We shall derive two lower bounds on the value the random variable $\xi(\ClauseSet)$ can get: an unconditional lower bound $\LB[\xi]$ (not far from $\Ex[\ClauseSet \sim \Distribution]{\xi(\ClauseSet)}$) and a lower bound $\LB[\xi~|~SAT]$ on $\xi(\ClauseSet)$ when $\ClauseSet$ is satisfiable (this will be significantly larger than $\Ex[\ClauseSet \sim \Distribution]{\xi(\ClauseSet)}$). We note that
\begin{align}\nonumber
    \Ex[\ClauseSet \sim \Distribution]{\xi(\ClauseSet)} &= \Prx{SAT(\ClauseSet)}\cdot \Ex{\xi(\ClauseSet)~|~SAT(\ClauseSet)} + \Prx{\neg SAT(\ClauseSet)}\cdot\Ex{\xi(\ClauseSet)~|~\neg SAT(\ClauseSet)}\\\label{inq:main-2wise}
    &\ge
    p\cdot \LB[\xi~|~SAT] + (1-p) \cdot \LB[\xi].\quad
\end{align}

We next derive $\Ex[\ClauseSet \sim \Distribution]{\xi(\ClauseSet)}$, $\LowerBound{\xi}$, and $\LowerBound{\xi~|~SAT}$. We denote as $M\eqdef|\ClauseTypeSet|=8\binom{n}{3}$ the number of possible clauses and $\lambda\eqdef m/M > 1$.
First, we observe that 
\begin{equation}
    \label{eq:expect_xi_2wise}
    \Ex[\ClauseSet \sim \Distribution]{\xi(\ClauseSet)}=\sum_{c_i,c_j\in \ClauseSet}\Ex{\ind{c_i=c_j}}
    =\frac{m^2-m}{2}\cdot \Prx{c_i=c_j}=
    \frac{m^2-m}{2\cdot M}=m\cdot\frac{\lambda\cdot M-1}{2\cdot M}.
\end{equation}
To derive the lower bounds $\LowerBound{\xi}$ and 
$\LowerBound{\xi~|~SAT}$, we observe that any given clause type $t\in\ClauseTypeSet$ contributes $\binom{d_t}{2}$ pairs
to $\xi(\ClauseSet)$, where $d_t\eqdef\sum_{c\in\ClauseSet}\ind{c=t}$ is the number of type $t$ clauses in $\ClauseSet$. I.e.,
\begin{equation}
    \label{eq:optimization_xi}
    \xi(\ClauseSet)=\sum\limits_{t\in\ClauseTypeSet}
    \frac{d_t^2-d_t}{2},\quad\quad\quad\text{where }
    \sum\limits_{t\in\ClauseTypeSet}d_t=m.
\end{equation}
The minimum of $\xi(\ClauseSet)=\frac{1}{2}\sum d_t^2-\frac{m}{2}$ under the constraint $\sum d_t=m$ is achieved when all the $d_t$ variables are equal, i.e., equal to $\frac{m}{|\ClauseTypeSet|}=\lambda$. Thus, we get the following lower bound $\LB[\xi]$ on $\xi(\ClauseSet)$:
\begin{equation}
    \label{eq:general_lb_xi}
    \xi(\ClauseSet)\ge\LowerBound{\xi}\eqdef\frac{1}{2} \sum_{t\in \ClauseTypeSet}\lambda^2 -\frac{m}{2}=
    m\cdot\frac{\lambda-1}{2}.    
\end{equation}

To derive the lower bound $\LowerBound{\xi~|~SAT}$ on $\xi(\ClauseSet)$ for a satisfiable formula $\ClauseSet$, we note that at least a $\frac{1}{8}$-fraction of the clause types are not present in $\ClauseSet$. I.e., at least $\frac{M}{8}$ of the $d_t$ variables have value equal to $0$ in~\eqref{eq:optimization_xi}. Similarly to \eqref{eq:general_lb_xi}, the minimum value of $\xi(\ClauseSet)$ is achieved when all the remaining $\frac{7M}{8}$ $d_t$ variables are equal to each other, getting the value $\frac{8\cdot m}{7\cdot M}$. That is,
\begin{equation}
\label{eq:satisfiable_lb_xi}
\xi(\ClauseSet)\ge\LowerBound{\xi~|~SAT}\eqdef \frac{7\cdot M}{8}\cdot\left(\frac{8\lambda}{7}\right)^2-\frac{m}{2}= m\cdot\frac{\frac{8}{7}\lambda-1}{2}.    
\end{equation}
We plug the bounds \eqref{eq:expect_xi_2wise},\eqref{eq:general_lb_xi}, and \eqref{eq:satisfiable_lb_xi} into \eqref{inq:main-2wise} to get 
\[
    m\cdot\frac{\lambda\cdot M-1}{2\cdot M}\ge
    p\cdot m\cdot\frac{\frac{8}{7}\lambda-1}{2}+(1-p)\cdot m\cdot\frac{\lambda-1}{2}.
\]
After simple algebraic transformation, this is equivalent to the inequality $1-\frac{1}{M}\ge\frac{p}{7}\lambda$, which implies that $p\le\frac{7\cdot M}{m}=56\binom{n}{3}/m$.
\end{proof}

To lower-bound $\UST_2(n)$, we use a specific $2$-clause independent distribution, which is depicted as Model~\ref{model:pair-wise-construction}. The next lemma proves the correctness of our construction.
\begin{lemma}
    Model \ref{model:pair-wise-construction} defines a $2$-clause independent probability distribution that generates 3-SAT instances of size $\Omega(n^3)$ that are satisfiable with probability at least $1-O(n^{-3})$.
    \label{lem:lower-bound-pairwise-sat}
\end{lemma}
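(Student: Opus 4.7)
The plan is to prove the three claims of the lemma — that Model~\ref{model:pair-wise-construction} produces (i) an instance with $m \in \Omega(n^3)$, (ii) drawn from a $2$-clause independent distribution, and (iii) satisfiable except with probability $O(n^{-3})$ — by directly analyzing the random choices that Model~2 makes.

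First, I would read off $m$ from the model's specification and verify $m = \Theta(n^3)$, consistent with the upper bound $\UST_2(n) \le 56\binom{n}{3}$ of Lemma~\ref{lem:upper-bound-pairwise-sat}. Second, for the $2$-clause independence check, I would proceed in two stages. Stage~(a) is to show that each clause $c_i$ is marginally uniform on $\ClauseTypeSet$; this should follow by computing $\Prx{c_i = t}$ from the random choices of Model~2 (most likely a planted assignment together with some symmetry-restoring randomization) and exploiting the symmetry across clause types. Stage~(b) is to show
\[
    \Prx{c_i = t_1,\; c_j = t_2} \;=\; \frac{1}{|\ClauseTypeSet|^{2}}
\]
for all positions $i\ne j$ and all pairs $t_1,t_2\in\ClauseTypeSet$. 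I would carry this out by partitioning pairs $(t_1,t_2)$ according to the size of their shared variable set ($0$, $1$, $2$, or $3$) and, within each overlapping class, by which shared signs agree; then I would match the joint probability induced by Model~2 against $1/|\ClauseTypeSet|^{2}$ in every class.

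Third, for satisfiability I would isolate a single ``planted'' assignment $\sigma$ inside Model~2's random draw and show that every output clause is satisfied by $\sigma$ except on a low-probability event. A union bound over the $\Theta(n^{3})$ clauses, against a per-clause failure probability of at most $O(n^{-6})$, then yields the claimed $1 - O(n^{-3})$ satisfiability probability.

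The main obstacle is Stage~(b). A naive planted construction — sampling each $c_i$ uniformly among the $7\binom{n}{3}$ clauses satisfied by a random $\sigma$ — already fails pairwise independence in several overlap classes. For instance, for $t_1 = t_2 = t$ the joint probability equals $(7/8)/(7\binom{n}{3})^{2} = 1/(56\binom{n}{3}^{2})$ instead of the target $1/(64\binom{n}{3}^{2})$; for pairs sharing all three variables with conflicting sign patterns the probability deviates in the opposite direction, and similar mismatches occur in the $1$- and $2$-shared-variable classes. Model~2 must therefore include a corrective mechanism — a carefully chosen mixture with a complementary distribution, or a sign-flipping/permutation step — whose contribution exactly cancels these discrepancies in every overlap class. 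Verifying that the cancellation is exact for every individual class, and not merely on average, is the core technical effort of the proof.
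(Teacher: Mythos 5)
There is a genuine gap: your write-up is a plan that defers exactly the step the lemma is about. You correctly identify that the heart of the matter is verifying the pairwise joint distribution class by class, and you correctly observe that a naive planted construction (each clause uniform over the $7\binom{n}{3}$ clauses satisfied by $\sigma$) fails pairwise independence; but you then appeal to an unspecified ``corrective mechanism'' in the model and never verify that any such mechanism works. Since the lemma is a statement about the concrete Model~2 given in the paper, a proof must analyze that construction. Model~2's planted branch is not the naive one you analyze: it matches the $m=\binom{n}{3}$ clauses to \emph{all distinct} variable triplets and chooses each sign triplet uniformly among the four patterns whose parity agrees with $(\sigma_i,\sigma_j,\sigma_k)$ (a $3$-XOR--style rule), which guarantees satisfaction by $\sigma$ with probability $1$, makes each sign triplet marginally uniform over all eight patterns (since $\sigma$ is uniform), and yields pairwise independence of signs because any two distinct triplets each contain a variable absent from the other, so the uniform randomness of $\sigma$ on that variable decouples the two parity constraints. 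The only mismatch left is in the variable-triplet pair distribution (the planted branch never produces two clauses on the same triplet), and this is repaired by mixing, with probability exactly $\binom{n}{3}^{-1}$, the single-triplet distribution in which all clauses share one random triplet with fully uniform signs. None of this cancellation argument appears in your proposal, so the $2$-clause independence claim is unproven.

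Your satisfiability accounting is also structurally off for this model. There is no per-clause failure event and no union bound: in the planted branch every clause is satisfied by $\sigma$ deterministically, and the entire $O(n^{-3})$ failure probability is the mixture weight $\binom{n}{3}^{-1}$ of the single-triplet branch (which is typically unsatisfiable, since with $\binom{n}{3}$ clauses on one triplet all eight sign patterns appear with high probability). So the claimed bound $1-O(n^{-3})$ comes from the design of the mixture, not from a $O(n^{-6})$-per-clause estimate; as stated, that part of your argument does not correspond to anything in the construction being analyzed.
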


\modelname
\begin{algorithm}
{\bf Input:} Integer $n\geq 3$.

{\bf Output:} A $3$-SAT instance $C$ with $n$ variables and $m=\binom{n}{3}$ clauses.

\caption{Selects a $3$-SAT instance according to a $2$-clause independent distribution} \label{model:pair-wise-construction}

\begin{itemize}
    \item With probability $1-\binom{n}{3}^{-1}$, construct a satisfiable instance $\ClauseSet$ 
    \begin{enumerate}
        \item Match $m=\binom{n}{3}$ clauses to all different $\TripletVarSet(n)$ variable triplets uniformly at random;
        \item Pick a random truth assignment $\sigma\sim\uni[\{0,1\}^n]$;
        \item For each clause $c\in\ClauseSet$ matched to the variable triplet $(x_i,x_j,x_k)\in T(n)$
        \begin{itemize}
            \item Pick a random sign triplet 
            $(s_1,s_2,s_3)$ of the same parity with $(\sigma_i,\sigma_j,\sigma_k)$\\ (i.e., $\ind{s_1=\Neg}+\ind{s_2=\Neg}+\ind{s_3=\Neg}+\sigma_i+\sigma_j+\sigma_k=0\mod 2$);
            \item Let clause $c\gets ((x_i,s_1),(x_j,s_2),(x_k,s_3))$;
        \end{itemize} 
    \end{enumerate}
    \item With probability $\binom{n}{3}^{-1}$, sample $\ClauseSet$ from distribution $\univar$ defined as follows:
    \begin{enumerate}
        \item Pick a random \textbf{single} variable triplet $(x_i,x_j,x_k)\sim\uni[T(n)]$;
        \item For each clause $c\in\ClauseSet$
        \begin{itemize}
            \item Pick a random sign triplet 
            $(s_1,s_2,s_3)\sim\uni[\{\Pos,\Neg\}^3]$;
            \item Let clause $c\gets ((x_i,s_1),(x_j,s_2),(x_k,s_3))$;
        \end{itemize}
    \end{enumerate}
\end{itemize}
\end{algorithm}

\begin{proof}
By its definition, with probability $1-\binom{n}{3}^{-1}$, Model~\ref{model:pair-wise-construction} returns a $3$-SAT instance that has a planted satisfying assignment $\sigma$.  Indeed, the condition
$\ind{s_1=\Neg}+\ind{s_2=\Neg}+\ind{s_3=\Neg}+\sigma_i+\sigma_j+\sigma_k=0\mod 2$ ensures that the assignment $x_i=\sigma_i, x_j=\sigma_j, x_k=\sigma_k$ satisfies clause $c=((x_i,s_1),(x_j,s_2),(x_k,s_3))$. Hence, using $\dist$ to denote the distribution according to which Model~\ref{model:pair-wise-construction} selects the $3$-SAT instance, we have $\Prx[\ClauseSet\sim\dist]{SAT(\ClauseSet)}\geq 1-\binom{n}{3}^{-1}$.

We are only left to verify that the distribution $\dist$ is $2$-clause independent. We first consider the distribution $\GetVariable(c)$ of variable triplets for $c\in\ClauseSet$ with $\ClauseSet\sim\dist$ and notice that it is pair-wise independent. 
Indeed, the probability of drawing a pair of identical variable triplets in a $2$-clause independent distribution is $\binom{n}{3}^{-1}$, while the probability of drawing two different variable triplets is $1-\binom{n}{3}^{-1}$. We exactly match these probabilities in our construction, as we pick a pair of non-identical variable triplets uniformly at random with probability $1-\binom{n}{3}^{-1}$ and with probability $\binom{n}{3}^{-1}$ generate a pair of clauses with identical set of variables.

For the distribution of sign triplets, we first note that distribution $\univar$ is a uniform pair-wise independent distribution for each single triplet of variables $(x_i,x_j,x_k)$. 
Second, we notice that when variable triplets are different in a pair of clauses $c_1,c_2$, then each of these clauses has at least one unique variable that does not appear in another clause (i.e., $x_i\in\GetVariable(c_1)$ and $x_i\notin\GetVariable(c_2)$, while  
$x_\ell\in\GetVariable(c_2)$ and $x_\ell\notin\GetVariable(c_1)$). 
As we pick in our construction the satisfying assignment $\sigma$ uniformly at random, the parity condition for the sign triplet in clause $c_2$ does not affect the sign of $x_\ell$ variable for any fixed sign triplet of $c_1$. Hence, we get all combinations of sign triplets in $c_1$ and $c_2$ (with a fixed set of variables) with the same probability. Therefore, our construction is $2$-clause independent.
\end{proof}

\section{A tight lower bound for $\UST_3(n)$}
    \label{sec:LargeSat-3}
        Clearly, Lemma~\ref{lem:upper-bound-pairwise-sat} also provides an upper bound on the upper satisfiability threshold $\UST_3(n)$. The proof of the next statement follows by presenting a matching lower bound through Model~\ref{model:3-wise-construction}.

\begin{theorem}
    \label{thm:3wise_tightness}
 $\UST_3(n)=\Theta(n^3)$.
\end{theorem}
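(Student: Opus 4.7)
The bound $\UST_3(n) \le O(n^3)$ is immediate: every $\Distribution \in \DistFamily_3(n,m)$ is in particular $2$-clause independent, so Lemma~\ref{lem:upper-bound-pairwise-sat} applies verbatim and yields $\Prx[\ClauseSet \sim \Distribution]{SAT(\ClauseSet)} \le 56\binom{n}{3}/m$, whence $\UST_3(n) \le \UST_2(n) \le O(n^3)$. All the work is the matching lower bound, which requires exhibiting a single $\Distribution \in \DistFamily_3(n,m)$ with $m = \Theta(n^3)$ and $\Prx[\ClauseSet \sim \Distribution]{SAT(\ClauseSet)} > 1/3$.

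The plan is to lift Model~\ref{model:pair-wise-construction} to $3$-wise independence. The dominant, satisfiable component draws a planted assignment $\sigma \sim \uni[\set{0,1}^n]$, bijects $m = \binom{n}{3}$ clause slots to the variable triplets in $\TripletVarSet(n)$ uniformly at random, and in each slot picks a sign triplet uniformly from the four patterns under which $\sigma$ satisfies an odd number of literals (the ``planted $3$-XOR'' distribution). Every formula produced this way is satisfied by $\sigma$. Because the bijection never reuses a variable triplet across slots, and because the parity constraint couples sign triplets whose variable sets intersect, this component is not $3$-clause independent on its own. I would therefore mix in small-weight correction components analogous to $\univar$ from the $2$-wise construction: a distribution that replicates one random variable triplet across all clauses (supplying mass to the ``all three equal'' variable-triplet pattern), together with distributions that force exactly one pair of slots to share a variable triplet while the third is drawn independently (supplying the ``exactly two equal'' patterns). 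The corrective weight should be $O(1/\binom{n}{3})$, so the overall satisfiability probability stays $1 - o(1)$.

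Verification would proceed by fixing any three slots $c_1, c_2, c_3$ and any three target types $t_1, t_2, t_3 \in \ClauseTypeSet$, factoring each $t_i$ into its variable triplet $\GetVariable(t_i)$ and its sign triplet $\GetSign(t_i)$, and checking $\Prx{c_i = t_i\ \forall i} = (8\binom{n}{3})^{-3}$. First, I would calibrate the mixing weights so that the induced marginal of $(\GetVariable(c_1), \GetVariable(c_2), \GetVariable(c_3))$ matches the i.i.d.\ uniform distribution on $\TripletVarSet(n)^3$ in each of the three equality patterns (all distinct, exactly two equal, all three equal). Then, conditional on the variable triplets, I would argue that the joint distribution over the nine signs is uniform: averaging over $\sigma$ randomizes the parity in every clause whose variables are not over-constrained by its peers, and the correction components supply the missing mass in the remaining overlapping configurations.

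The main obstacle will be the conditional sign step when the three variable triplets intersect nontrivially. In the $2$-wise proof it sufficed to observe that each of two clauses has at least one variable unique to it, which decouples the signs. With three clauses one must handle a richer zoo of intersection patterns (three triplets pairwise sharing one variable, two triplets sharing two variables while the third is disjoint, a ``sunflower'' with a single common variable, and so on), and in each configuration the $3$-XOR parity constraints couple the sign distributions of the three clauses in a different way. The delicate part of the proof is identifying a minimal family of correction distributions whose aggregate simultaneously cancels all these sign couplings across every configuration, while keeping the total corrective weight small enough that satisfiability stays above $1/3$ and the clause count remains $\Theta(n^3)$.
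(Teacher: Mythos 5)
Your route is the paper's route in outline (upper bound by inheriting Lemma~\ref{lem:upper-bound-pairwise-sat}, lower bound by a planted $3$-XOR component mixed with small correction components matched on the coincidence pattern of variable triplets), but two of your quantitative/structural claims do not survive scrutiny. First, the weight calibration: you cannot take $m=\binom{n}{3}$, corrective weight $O(1/\binom{n}{3})$, and satisfiability $1-o(1)$. Since your dominant component assigns distinct variable triplets to all slots, it contributes zero mass to the event that a fixed pair of clauses carries the same variable triplet, so the ``exactly two equal'' correction must by itself produce probability about $3\binom{n}{3}^{-1}$ of that pattern for \emph{every} fixed triple of slots. A correction of the kind you describe can only do this cheaply per unit of weight if many slots share triplets; but grouping slots in blocks larger than two inflates the ``all three equal'' probability far beyond the required $\binom{n}{3}^{-2}$, and pairing slots two-per-triplet (the paper's $\dist_2$) yields the pattern with probability only about $3/(m-1)$ per triple, forcing mixture weight about $(m-1)/\binom{n}{3}$ --- a \emph{constant} when $m=\Theta(n^3)$, and such an instance is essentially always unsatisfiable. (If instead you literally force one random pair among the $m$ slots to coincide, the per-triple probability is $3/\binom{m}{2}$ and the required weight exceeds $1$.) This is exactly why the paper restricts to $m\le\frac{1}{3}\binom{n}{3}$, sets $p=(m-1)\bigl(\binom{n}{3}^{-1}-\frac{1}{3}\binom{n}{3}^{-2}\bigr)$ and $q=\binom{n}{3}^{-2}$, and settles for $\Prx[\ClauseSet\sim\dist]{SAT(\ClauseSet)}\ge 2/3-O(n^{-6})$, which still clears the $1/3$ bar of Definition~\ref{def:LargestSat}; with your $m=\binom{n}{3}$ there is no probability budget left.

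Second, you locate the ``main obstacle'' in sign couplings among overlapping but distinct variable triplets and plan further correction components to cancel them. No such corrections are needed, and hunting for them is the wrong fix: averaging over the uniformly random planted assignment $\sigma$ already makes the three sign triplets exactly uniform and mutually independent whenever the three variable triplets are pairwise distinct. The case analysis is short: if the union of the three triplets has at least $5$ variables, some clause owns a private variable and the pairwise argument of Lemma~\ref{lem:lower-bound-pairwise-sat} applies (Claim~\ref{cl:3wise_union5}); if the union has exactly $4$ variables, the associated $4\times 4$ parity system over $\mathbb{F}_2$ is invertible, so for every choice of sign triplets there is exactly one restriction of $\sigma$ consistent with them, giving joint uniformity (Claim~\ref{cl:3wise_union4}). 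Establishing this conditional uniformity for all distinct-triplet configurations is the technical heart of the lower bound; your proposal leaves it open, and the ``zoo of intersection patterns'' you fear (sunflowers, two shared variables, etc.) is dispatched entirely by these two cases rather than by additional mixture components, whose presence would in any case have to be re-checked against both the independence constraints and the satisfiability budget.
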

\begin{proof}
We give an explicit construction (see Model~\ref{model:3-wise-construction}) of a $3$-clause independent distribution $\dist\in\DistFamily_3(n,m)$ with $n$ variables and (an even number of) $m\le\frac{1}{3}\cdot\binom{n}{3}$ clauses, such that $\Prx[\ClauseSet\sim\dist]{SAT(\ClauseSet)}\ge2/3-O(n^{-6})$. Our construction in Model~\ref{model:3-wise-construction} follows the same pattern as in Model~\ref{model:pair-wise-construction}, but uses an additional step and minor modifications to ensure $3$-clause independence.\footnote{The first and third block of Model~\ref{model:3-wise-construction} correspond to the two blocks of Model~\ref{model:pair-wise-construction}. The only difference in the first block is that the matching of $C$ is not to all the variable triplets in $T(n)$ but only to $m$ of them. }

\begin{algorithm}[ht]
{\bf Input:} Integer $n\geq 3$ and even integer  $m\le \frac{1}{3}\cdot \binom{n}{3}$.

\vspace{0.3em}

{\bf Output:} A $3$-SAT instance $C$ with $n$ variables and $m$ clauses.

\caption{Selects a $3$-SAT instance according to a $3$-clause independent distribution} \label{model:3-wise-construction}

Set $p\eqdef(m-1)\cdot\left(\binom{n}{3}^{-1}-\frac{1}{3}\binom{n}{3}^{-2}\right)$
and $q\eqdef\binom{n}{3}^{-2}$;

\vspace{0.3em}

\begin{itemize}
    \item With probability $1-p-q$, construct a satisfiable instance $\ClauseSet$
    \begin{enumerate}
        \item Match $\ClauseSet$ clauses to $m$ different variable triplets in $T(n)$ uniformly at random;
        \item Pick a random truth assignment $\sigma\sim\uni[\{0,1\}^n]$;
        \item For each clause $c\in\ClauseSet$ matched to the variable triplet $(x_i,x_j,x_k)\in T(n)$
        \begin{itemize}
            \item Pick a random sign triplet 
            $(s_1,s_2,s_3)$ of the same parity with $(\sigma_i,\sigma_j,\sigma_k)$\\ (i.e., $\ind{s_1=\Neg}+\ind{s_2=\Neg}+\ind{s_3=\Neg}+\sigma_i+\sigma_j+\sigma_k=0\mod 2$);
            \item Let clause $c\gets ((x_i,s_1),(x_j,s_2),(x_k,s_3))$;
        \end{itemize} 
    \end{enumerate}
    \item With probability $p$, construct an instance $\ClauseSet$ with $m/2$ different variable triplets
    \begin{enumerate}
        \item Uniformly at random match $\ClauseSet$ to $\frac{m}{2}$ different variable triplets in $T(n)$\\ 
        (exactly $2$ clauses in $\ClauseSet$ per one variable triplet);
        \item For each clause $c\in\ClauseSet$ assigned to the variable triplet $(x_i,x_j,x_k)\in T(n)$
        \begin{itemize}
            \item Pick a random sign triplet 
            $(s_1,s_2,s_3)\sim\uni[\{\Pos,\Neg\}^3]$;
            \item Let clause $c\gets ((x_i,s_1),(x_j,s_2),(x_k,s_3))$;
        \end{itemize}
    \end{enumerate}
    \item With probability $q$, sample $\ClauseSet$ from distribution $\ClauseSet\sim\univar$ as follows:
        \begin{enumerate}
        \item Pick a random single variable triplet $(x_i,x_j,x_k)\sim\uni[T(n)]$;
        \item For each clause $c\in\ClauseSet$
        \begin{itemize}
            \item Pick a random sign triplet 
            $(s_1,s_2,s_3)\sim\uni[\{\Pos,\Neg\}^3]$;
            \item Let clause $c\gets ((x_i,s_1),(x_j,s_2),(x_k,s_3))$;
        \end{itemize}
    \end{enumerate}
\end{itemize}
\end{algorithm}
Now, we need to differentiate between three types of situations for a triplet of clauses: type $I$, when all three clauses have pair-wise distinct sets of variables; type $II$,  when exactly one pair of clauses share the same variable triplet; type $III$, when all three clauses have the same set of variables. Notice that 
\[
\Prlong[\distind]{I}=1-3\cdot\binom{n}{3}^{-1},\quad\quad
\Prlong[\distind]{II}=3\cdot\binom{n}{3}^{-1}-\binom{n}{3}^{-2},\quad\quad
\Prlong[\distind]{III}=\binom{n}{3}^{-2}.
\]
The distribution $\dist$ used by Model~\ref{model:3-wise-construction} consists of three distributions: $\dist_1$ (sampled with probability $1-p-q$), $\dist_2$ (sampled with probability $p$), and $\dist_3=\univar$ (sampled with probability $q$). The parameters $p$ and $q$ are chosen so that 
\[
\Prlong[\ClauseSet\sim\dist]{II}=p\cdot\Prlong[\dist_2]{II}=p\cdot\frac{3}{m-1}=\Prlong[\distind]{II};\quad\quad
\Prlong[\ClauseSet\sim\dist]{III}=q=\Prlong[\distind]{III}.
\]
I.e., we perfectly match the probabilities of the events $I,II,$ and $III$ for $3$-SAT instances produced by distributions $\dist$ and $\distind$. Note that, similarly to Model~\ref{model:pair-wise-construction}, the generated $3$-SAT instance is always satisfiable when $\ClauseSet\sim\dist_1$. The numbers $p$ and $q$ in the definition of Model~\ref{model:pair-wise-construction} are chosen so that $p+q\le \frac{1}{3}+O(n^{-6})$ for $m\le\frac{1}{3}\cdot \binom{n}{3}$. Hence, $\Prx[\ClauseSet\sim\dist]{SAT(\ClauseSet)}\geq 2/3-O(n^{-6})$.

It remains to verify that distribution $\dist$ is indeed $3$-clause independent. Let us fix any three clauses $c_1,c_2,c_3\in\ClauseSet$ with $\ClauseSet\sim\dist$.
We shall separately study the distributions of variable triplets and sign triples of $(c_1,c_2,c_3)$. To this end, we define $\GetVariable(c_i)\eqdef\hTi\in T(n)$ and $\GetSign(c_i)\eqdef\hrhoi\in\{\Pos,\Neg\}^3$ for each $i\in[3]$. 
Let $T_1,T_2,T_3\in T(n)$ and 
$\rho_1,\rho_2,\rho_3\in\{\Pos,\Neg\}^3$ be respectively any three fixed variable triplets and any three fixed sign triplets.
Since we perfectly match the probabilities of types $I,II,III$ for variable triples with the respective probabilities in $\distind$, and we pick variable triplets uniformly at random in each $\dist_1,\dist_2,$ and $\dist_3$, we have
\[
\Prlong[\ClauseSet\sim\dist]{\hTi=T_i,~i\in[3]}=
\Prlong[\ClauseSet\sim\distind]{T_i,~i\in[3]}.
\]
Thus, the respective distribution of variable triples generated by distribution $\dist$
is $3$-wise independent. For the distribution of sign triplets, we note that we sample signs independently from the variable triplets and uniformly over $\{\Pos,\Neg\}^3$ in
either of $\dist_2$ and $\dist_3$. Hence, it only remains to show that the distribution of sign triplets in $\dist_1$ is uniform $3$-wise independent for any feasible triplets of variables $(T_1,T_2,T_3)$ such that $T_1\ne T_2, T_2\ne T_3$, and $T_1\ne T_3$. We prove this by considering two cases. First, when variable triples $T_1, T_2$, $T_3$ have at least $5$ different variables among them.
\begin{claim}
    \label{cl:3wise_union5}
    When $|T_1\cup T_2\cup T_3|\ge 5$ and $T_1\ne T_2$, $T_2\ne T_3$, $T_1\ne T_3$, we have 
    \[
    \Prlong[\ClauseSet\sim\dist_1]{\hrhoi=\rho_i,~i\in[3]~\Big\vert~\hTi=T_i,~i\in[3]}=\Prlong[\ClauseSet\sim\distind]{\rho_i,~i\in[3]}.
    \]
\end{claim}
\begin{proof}
In this case, at least one of the variable triplets $T_1$, $T_2$, or $T_3$ must have a variable that is not present in the other two triplets. Indeed, if we count appearances of the variables in $T_1, T_2, T_3$, we get $3\cdot 3$. On the other hand, if each variable in $T_1\cup T_2\cup T_3$ appears at least twice, our count must be at least $2\cdot 5 > 9$ -- a contradiction. 
Hence, we may assume, without loss of generality, that variable triplet $T_3$ has a unique variable
$z\in T_3$ such that $z\notin T_1, T_2$.

As we choose the satisfying assignment $\sigma$ in $\dist_1$ uniformly at random, $\sigma(z)\in\{0,1\}$ equally likely for any fixed assignment of variables in $T_1\cup T_2$. Therefore, the choice of sign triplet for $T_3$ for any choice of sign triplets in $T_1$ and $T_2$ is uniform over $\{\Pos,\Neg\}^3$. This means that the choice of sign triplet for $c_3$ in $\dist_1$ is uniform and independent of the clauses $c_1$ and $c_2$. By the same reasoning as in the proof of Lemma~\ref{lem:lower-bound-pairwise-sat} for Model~\ref{model:pair-wise-construction}, the sign triplets in $c_1$ and $c_2$ are uniform over $\{\Pos,\Neg\}^3$ and pairwise independent in $\dist_1$.   
\end{proof}

Next, we consider the case when $T_1,$ $T_2,$ and $T_3$ have exactly $4$ variables among them.
\begin{claim}
    \label{cl:3wise_union4}
    When $|T_1\cup T_2\cup T_3|=4$ and $T_1\ne T_2$, $T_2\ne T_3$, $T_1\ne T_3$, we have 
    \[
    \Prlong[\ClauseSet\sim\dist_1]{\hrhoi=\rho_i,~i\in[3]~\Big\vert~\hTi=T_i,~i\in[3]}=\Prlong[\ClauseSet\sim\distind]{\rho_i,~i\in[3]}.
    \]
\end{claim}

\begin{proof}
Let $T_1\cup T_2\cup T_3=\{x_1,x_2,x_3,x_4\}$.
We shall prove a stronger statement that four sign triplets $\hrhoi\in\{\Pos,\Neg\}^3$ for $i\in[4]$ with $\ClauseSet\sim\dist_1$ are $4$-wise independent when their respective variable triplets $T_1,T_2,T_3,T_4$ are all four different subsets of $\{x_1,x_2,x_3,x_4\}$.
To this end, consider a system of $4$ equations with $4$ variables over the finite field $\field$
\begin{equation}
\label{eq:xor4equations}
    \begin{cases}
    x_1\oplus x_2\oplus x_3  = v_1 & \quad\text{corresponds to }\hTi[1],\hrhoi[1]\\
    x_1\oplus x_2\oplus x_4  = v_2 & \quad\text{corresponds to }\hTi[2],\hrhoi[2]\\
    x_1\oplus x_3\oplus x_4  = v_3 & \quad\text{corresponds to }\hTi[3],\hrhoi[3]\\
    x_2\oplus x_3\oplus x_4  = v_4 & \quad\text{corresponds to }\hTi[4],\hrhoi[4].
  \end{cases}
\end{equation}
System~\eqref{eq:xor4equations} always has a unique solution for any $v_1,v_2,v_3,v_4\in\{0,1\}$. I.e., for any choice of sign triplets $(\rho_i)_{i\in[4]}$, there is a unique truth assignment $\sigma$ restricted to variables $x_1,x_2,x_3,x_4$ that satisfies the parity constraint from our construction of $\dist_1$ (for each $\rho_i = (s_1^i,s_2^i,s_3^i)$, $i\in[4]$, we set $v_i=\ind{s_1^i=\Neg}\oplus\ind{s_2^i=\Neg}\oplus\ind{s_3^i=\Neg}$ and the restriction of the truth assignment $\sigma$ to variables $x_1,x_2,x_3,x_4$ is the solution to the corresponding system of linear equations~\eqref{eq:xor4equations}). As we choose the satisfying assignment $\sigma$ uniformly at random in $\dist_1$, the sign triplets $(\hrhoi)_{i\in[4]}$ are independent and uniformly distributed over $\{\Pos,\Neg\}^3$.
\end{proof}

The above two cases in Claims~\ref{cl:3wise_union5} and \ref{cl:3wise_union4} cover all the possibilities for the variable triplets generated in $\dist_1$, as we always have $\hTi\ne\hTi[j]$ for different clauses in $\dist_1$, which concludes the proof of 
$3$-clause independence of distribution $\dist$. Since our construction has size $m=\Omega(n^3)$, and given that $\UB_3(n)\le\UB_2(n)=O(n^3)$, Theorem~\ref{thm:3wise_tightness} follows.  
\end{proof}

\section{An upper bound for $\UST_4(n)$}
    \label{sec:LargeSat-4}
        Our next result is rather surprising as it indicates that $4$-wise independence allows for a steep decrease in the upper satisfiability threshold compared to $2$- and $3$-wise independence.

\begin{theorem}\label{thm:upper-bound-result-4wise}
$\UST_4(n)=O(n^2)$.
\end{theorem}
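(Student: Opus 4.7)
The plan is to generalize the template of Lemma~\ref{lem:upper-bound-pairwise-sat} from $2$-wise to $4$-wise independence via a $4$-clause statistic. Associate to each formula $\phi$ a bipartite multigraph $G(\phi)$ whose left vertex set $L$ consists of unordered pairs of literals on distinct variables ($|L|=2n(n-1)$) and whose right vertex set $R$ consists of singleton literals ($|R|=2n$); each clause $c=\{\ell_1,\ell_2,\ell_3\}\in\phi$ contributes three edges, one per way of splitting $c$ as pair--singleton. Writing $N(P,s)$ for the number of clauses in $\phi$ whose literal set is $P\cup\{s\}$, define
$$
\kappa(\phi) \defeq \frac{1}{4}\sum_{\substack{P_1\neq P_2\\ s_1\neq s_2}} N(P_1,s_1)\,N(P_1,s_2)\,N(P_2,s_1)\,N(P_2,s_2),
$$
summed over configurations where all four clauses $P_i\cup\{s_j\}$ are valid. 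This counts labelled copies of $K_{2,2}$ in $G(\phi)$ with the natural multigraph multiplicity.

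The first key observation is that $\kappa$ is a $4$-clause statistic: each summand, upon expanding the factors $N(P,s)$ into indicator variables, depends on at most four clauses since the four target clause types $P_i\cup\{s_j\}$ are distinct. By~\eqref{eq:exp-between-k-wise-and-ind.}, $\Ex[\ClauseSet\sim\Distribution]{\kappa}=\Ex[\ClauseSet\sim\Distind]{\kappa}$, and a direct count gives $\Ex{\kappa} = (\#\text{ valid }K_{2,2}\text{ structures})\cdot m(m-1)(m-2)(m-3)/M^4 = \Theta(m^4/n^6)$, where $M=8\binom{n}{3}=\Theta(n^3)$ and the structure count is $\Theta(n^6)$.

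As in Lemma~\ref{lem:upper-bound-pairwise-sat}, we then establish two lower bounds on $\kappa(\phi)$: an unconditional $\LB[\kappa]$, and a strengthened $\LB[\kappa~|~SAT]$ when $\phi$ is satisfiable. Plugging into $\Ex{\kappa}\ge p\cdot\LB[\kappa~|~SAT]+(1-p)\cdot\LB[\kappa]$ yields $p \le (\Ex{\kappa}-\LB[\kappa])/(\LB[\kappa~|~SAT]-\LB[\kappa])$. For the unconditional bound, we apply a chain of Cauchy--Schwarz inequalities to the biadjacency matrix $[N(P,s)]$ under the mass constraint $\sum_{P,s}N(P,s)=3m$, giving $\LB[\kappa]=\Omega(m^4/(|L|^2|R|^2))=\Omega(m^4/n^6)$, which matches $\Ex{\kappa}$ to leading order in $m$. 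For $\LB[\kappa~|~SAT]$, the pigeonhole observation is that under any satisfying assignment $\sigma$ the $\binom{n}{3}$ clause types with all three literals false under $\sigma$ (a $1/8$-fraction of $M$) must have $N_t=0$; rerunning the Cauchy--Schwarz bound under this sparsity constraint forces a constant-factor improvement $\LB[\kappa~|~SAT]\ge(1+\delta)\LB[\kappa]$ for an explicit $\delta>0$, obtained by counting the fraction of $K_{2,2}$ configurations compatible with the restriction.

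The main obstacle is establishing $\LB[\kappa~|~SAT]$ rigorously. Unlike the quadratic $\xi$ of Lemma~\ref{lem:upper-bound-pairwise-sat}, the quartic form $\kappa$ is not jointly convex in the $(N_t)$'s, so its minimum on the simplex is not automatically at the symmetric (uniform) point; the Cauchy--Schwarz chain must be set up with care to give a genuine pointwise lower bound, and it must interact properly with the specific structure of the $\sigma$-forbidden subset of clause types. Once both bounds are in place, the excess $\Ex{\kappa}-\LB[\kappa]=O(m^3/n^6)$ (a lower-order correction from the $m(m-1)(m-2)(m-3)$ vs $m^4$ discrepancy) is of strictly smaller order than $\LB[\kappa~|~SAT]-\LB[\kappa]=\Omega(m^4/n^6)$, yielding $p=O(1/m)$, which is at most $1/3$ provided the constant $C$ in $m=Cn^2$ is chosen sufficiently large, establishing $\UST_4(n)=O(n^2)$.
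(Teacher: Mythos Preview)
Your approach is essentially the paper's: the same bipartite multigraph $G(\phi)$, the same $K_{2,2}$-count statistic $\kappa$, and the same template of comparing $\Ex{\kappa}$ with an unconditional lower bound and a strengthened lower bound under satisfiability. There are, however, two quantitative missteps that you should correct.

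First, the claim that $\Ex{\kappa}-\LB[\kappa]=O(m^3/n^6)$, coming solely from $m(m{-}1)(m{-}2)(m{-}3)$ versus $m^4$, is wrong. The Cauchy--Schwarz chain that produces $\LB[\kappa]$ introduces its own lower-order terms: when you pass from $\sum_{(v,v')}\binom{|B_{v,v'}|}{2}$ to a bound via $\sum|B_{v,v'}|$, and then from $\sum_u\binom{d_u}{2}$ to a bound via $\sum_u d_u=3m$, you lose terms of size $\Theta(m^3/n^4)$, not $\Theta(m^3/n^6)$. Concretely the paper obtains $\Ex{\kappa}\le \frac{81m^4}{64n^6}+O(m^3/n^4)$ and $\LB[\kappa]\ge \frac{81\tilm^4}{64n^6}-O(\tilm^3/n^4)$, so the excess is $\Theta(m^3/n^4)$. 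This still gives $p=O(n^2/m)$, which suffices for the theorem, but your claimed $p=O(1/m)$ is too strong.

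Second, the SAT lower bound is not obtained by the naive ``$1/8$ of clause types are missing, so rerun Cauchy--Schwarz on a smaller support'' argument. The forbidden clause types form a highly structured set of edges in $G(\phi)$: with $L_U\subseteq L$ the pairs of two $\sigma$-false literals and $R_U\subseteq R$ the $\sigma$-false singletons, the constraint is exactly that there are no $L_U$--$R_U$ edges. The paper exploits this by splitting the $K_{2,2}$ count according to whether the right-side pair lies in $R_S{\times}R_U$ or not, writing the bound in terms of $P=\sum_{R_S\times R_U}|B_{v,v'}|$ and $Q=\sum_{\text{rest}}|B_{v,v'}|$, and then doing a two-case analysis on the total degree $\sum_{u\in L_U}\gamma_u$ into $R_S$. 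The payoff is small---the leading coefficient improves only from $\frac{81}{64}$ to $\frac{82}{64}$---and this $\frac{1}{64}$ gain is what drives the bound once divided by the $\Theta(m^3/n^4)$ excess. Your sketch does not yet contain the mechanism that produces this improvement; the generic ``support shrinks by $1/8$'' heuristic would naively suggest a much larger constant-factor gain, which does not survive the actual optimization because the forbidden edges are concentrated on a small ($|L|/4\times|R|/2$) block rather than spread uniformly.
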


\begin{proof}
We will prove the theorem by showing that for any positive integers $n$ and $m$ and any $4$-clause independent probability distribution $\Distribution\in\DistFamily_4(n,m)$, it holds $\Prx[\ClauseSet\sim\dist]{SAT(\ClauseSet)}\leq O\left(\max\left\{\frac{n^2}{m},\frac{1}{\sqrt{n}}\right\}\right)$. The claim is obvious for $n<10$. We will assume that $n\geq 10$ and $m\leq \sqrt{10}\cdot n^{5/2}$, and will show that $\Prx[\ClauseSet\sim\dist]{SAT(\ClauseSet)}\leq \frac{4288\cdot n^2}{m}$ for every $4$-clause independent probability distribution $\Distribution\in\DistFamily_4(n,m)$. Note that for $m>\sqrt{10}\cdot n^{5/2}$, the probability bound of $O\left(\frac{1}{\sqrt{n}}\right)$ follows by selecting uniformly at random a subset of $\sqrt{10}\cdot n^{5/2}$ clauses. 

We will use a graph representation of $3$-SAT instances defined as follows. Given a $3$-SAT instance $\ClauseSet$ consisting of $m$ clauses over $n$ variables, the bipartite multi-graph $G(\ClauseSet)=(L\cup R,E)$ has a node corresponding to each (unordered) pair of literals $\{\ell_1,\ell_2\}$ from different variables at the left node side $L$ and a node corresponding to each literal $\ell$ at the right node side $R$. Hence, $|L|=4\binom{n}{2}$ and $|R|=2n$. For every clause $c=(\ell_1,\ell_2,\ell_3)$ of $\ClauseSet$, $G(\ClauseSet)$ has the three edges between the node corresponding to the pair of literals $(\ell_i,\ell_j)$ and the node corresponding to literal $\ell_{6-i-j}$ for $(i,j)\in \{(1,2),(1,3),(2,3)\}$.

The main proof idea of Theorem~\ref{thm:upper-bound-result-4wise} is to analyse the statistic 
$\kappa(\ClauseSet)$ defined as the number of distinct $K_{2,2}$ subgraphs in graph $G(\ClauseSet)$. Namely, we first derive an upper bound on the expectation $\Ex[\ClauseSet\sim \Distribution]{\kappa(\ClauseSet)}$ (see 
Lemma~\ref{lem:4-wise-upper-bound-argument}) when $\ClauseSet$ is drawn from the $4$-clause independent probability distribution $\Distribution\in \DistFamily_4(n,m)$. 
We then give two lower bounds on the value of the random variable $\kappa(\ClauseSet)$ by considering an underlying simple subgraph of $G(\ClauseSet)$. Note that the underlying simple subgraph corresponds to a smaller instance $\tilC$, in which we remove all repeated clauses. As we show in Lemma~\ref{lem:tilm-vs-m}, this does not significantly reduce the size of the instance. Our first lower bound (Lemma~\ref{lem:lower-bound-4-wise-any}) on $\kappa(\tilC)$ holds for any instance $\tilC$ and is very close to the upper bound on the expectation of $\kappa(\ClauseSet)$. On the other hand, when $\tilC$ is satisfiable, we manage to give a significantly stronger lower bound on $\kappa(\tilC)$ in Lemma~\ref{lem:lower-bound-4-wise-sat}. We conclude the proof of 
Theorem~\ref{thm:upper-bound-result-4wise} by relating all these upper and lower bounds with $\Ex[\ClauseSet\sim \Distribution]{\kappa(\ClauseSet)}$ in 
Lemma~\ref{lem:lower-bound-for-kappa}.


\paragraph{Upper-bounding $\Ex[\ClauseSet\sim \Distribution]{\kappa(\ClauseSet)}$.} 
A $K_{2,2}$ subgraph in graph $G(\ClauseSet)$ is formed by edges corresponding to four clauses. In the next simple Lemma~\ref{lem:structure}, we describe necessary and sufficient conditions for a set of four clauses to create one or two $K_{2,2}$ subgraphs in $G(\ClauseSet)$. The expected number of $K_{2,2}$ subgraphs in $G(\ClauseSet)$ can be calculated by linearity of expectation: we simply estimate the probability that a set of $4$ clauses corresponds to $K_{2,2}$ in $G(\ClauseSet)$ for any $4$-wise independent distribution $\Distribution\in \DistFamily_4(n,m)$, which is the same as for $\Distribution_{\text{Ind.}}$ distribution. 
We apply Lemma~\ref{lem:structure} to get a lower bound (a tight estimate of) on the expected number of $K_{2,2}$ subgraphs in Lemma~\ref{lem:4-wise-upper-bound-argument}.

\begin{lemma}\label{lem:structure}
Any set of four clauses in $\ClauseSet$ satisfies one of the following $3$ cases. 
    \begin{enumerate}
    \item Form exactly one $K_{2,2}$ subgraph in $G(\ClauseSet)$ and there is an ordering of the clauses $(c_1, c_2, c_3, c_4)$ such that $c_4$ and $c_1$ share no literals; $c_2$ shares two literals with $c_1$ and one literal with $c_4$; and $c_3$ has the literals of $c_1$ and $c_4$ that do not appear in $c_2$.
    \item Form exactly two $K_{2,2}$ subgraphs and there is an ordering of the clauses $(c_1, c_2, c_3, c_4)$ such that $c_4$ shares one literal with $c_1$; $c_2$ has the common literal of $c_1$ and $c_4$, one more literal of $c_1$, and one more literal of $c_4$; and $c_3$ shares the common literal of $c_1$ and $c_4$, and has the literals of $c_1$ and $c_4$ that do not belong to $c_2$.
    \item Do not form any $K_{2,2}$ subgraphs in $G(\ClauseSet)$.
    \end{enumerate}
\end{lemma}

\tikzset{graph node/.style={circle, draw, minimum size=0.2cm}}
\begin{figure}[t]
    \centering
    \begin{minipage}[b]{.3\textwidth}
    \centering
    
        \begin{subfigure}[b]{0.3\textwidth}
         \begin{adjustwidth*}{}{-0.9em}
            \begin{tikzpicture}
            \node[graph node] (L1) at (0,0) {};
            \node[graph node] (L2) at (0,-1) {};
    
            \node[graph node] (R1) at (2,0) {};
            \node[graph node] (R2) at (2,-1) {};
    
            \draw (L1) -- node[midway, above, font=\footnotesize] {$e_1$} (R1);
            \draw (L1) -- node[near start, left,font=\footnotesize] {$e_2$} (R2);
            \draw (L2) -- node[near start, left,font=\footnotesize] {$e_3$} (R1);
            \draw (L2) -- node[midway, below,font=\footnotesize] {$e_4$} (R2);
            \end{tikzpicture}
        \end{adjustwidth*}
        \caption{}
        \label{fig:4-edges}
     \end{subfigure}
     
     \vfill
     \vspace{3mm}
     
     \begin{subfigure}[b]{0.3\textwidth}
     \begin{adjustwidth*}{}{-2.9em}
            \begin{tikzpicture}
                \node[graph node, label={left:\footnotesize $(a,b)$}] (L1) at (0,0) {};
                \node[graph node, label={left:\footnotesize $(d,f)$}] (L2) at (0,-1) {};
    
                \node[graph node, label={right:\footnotesize $g$}] (R1) at (2,0) {};
                \node[graph node,label={right:\footnotesize $h$}] (R2) at (2,-1) {};
    
                \draw (L1) --  (R1);
                \draw (L1) --  (R2);
                \draw (L2) --  (R1);
                \draw (L2) --  (R2);
            \end{tikzpicture}
        \end{adjustwidth*}
        \caption{}
        \label{fig:case-1}
     \end{subfigure}
     \vfill
     \vspace{3mm}
     
     \begin{subfigure}[b]{0.3\textwidth}
     \begin{adjustwidth*}{}{-2.9em}
            \begin{tikzpicture}
                \node[graph node, label={left:\footnotesize $(a,b)$}] (L1) at (0,0) {};
                \node[graph node, label={left:\footnotesize $(a,d)$}] (L2) at (0,-1) {};
    
                \node[graph node, label={right:\footnotesize $g$}] (R1) at (2,0) {};
                \node[graph node,label={right:\footnotesize $h$}] (R2) at (2,-1) {};
                
                \draw (L1) --  (R1);
                \draw (L1) --  (R2);
                \draw (L2) --  (R1);
                \draw (L2) --  (R2);
            \end{tikzpicture}
        \end{adjustwidth*}
         \caption{}
         \label{fig:case-2a}
     \end{subfigure}
     \vfill
     \vspace{3mm}
     
     \begin{subfigure}[b]{0.33\textwidth}
     \begin{adjustwidth*}{}{-2.9em}
            \begin{tikzpicture}
                \node[graph node, label={left:\footnotesize $(a,g)$}] (L1) at (0,0) {};
                \node[graph node, label={left:\footnotesize $(a,h)$}] (L2) at (0,-1) {};
                
                \node[graph node, label={right:\footnotesize $b$}] (R1) at (2,0) {};
                \node[graph node,label={right:\footnotesize $d$}] (R2) at (2,-1) {};
                
                \draw (L1) --  (R1);
                \draw (L1) --  (R2);
                \draw (L2) --  (R1);
                \draw (L2) --  (R2);
            \end{tikzpicture}
        \end{adjustwidth*}
         \caption{}
         \label{fig:case-2b}
     \end{subfigure}
     \end{minipage}
    \begin{subfigure}[b]{0.3\textwidth}
         \centering
            \begin{tikzpicture}
                \foreach \x [count=\xi] in {{(a,b)}, {(a,g)}, {(a,h)}, {(b,g)}, {(b,h)}, {(d,f)}, {(d,g)}, {(d,h)}, {(f,g)}, {(f,h)}}
                \node[graph node, label=left:\footnotesize $\x$] (L\xi) at (0,-\xi+1) {};
            
                \node[graph node, ] (L1) at (0,-1+1) {};
                \node[graph node, ] (L6) at (0,-6+1) {};
                
                \foreach \x [count=\xi] in {a, b, d, f, g, h}
                \node[graph node, label=right:\footnotesize $\x$] (R\xi) at (2,-\xi*1.5*1+0.75) {};
            
                \node[graph node, ] (R5) at (2,-6.75*1) {};
                \node[graph node, ] (R6) at (2,-8.25*1) {};
                
                \draw [] (L1) -- (R5);
                \draw [] (L1) -- (R6);
                \draw [] (L6) -- (R5);
                \draw [] (L6) -- (R6);
                \draw (L2) -- (R2);
                \draw (L3) -- (R2);
                \draw (L4) -- (R1);
                \draw (L5) -- (R1);
                \draw (L7) -- (R4);
                \draw (L8) -- (R4);
                \draw (L9) -- (R3);
                \draw (L10) -- (R3);
            \end{tikzpicture}
         \caption{}
         \label{fig:3-clauses-1-k22}
     \end{subfigure}
     \begin{subfigure}[b]{0.3\textwidth}
         \centering
            \begin{tikzpicture}
                \foreach \x [count=\xi] in {{(a,b)}, {(a,g)}, {(a,h)}, {(b,g)}, {(b,h)}, {(a,d)}, {(d,g)}, {(d,h)}}
                \node[graph node, label=left:\footnotesize $\x$] (L\xi) at (0,-\xi+1) {};
            
                \node[graph node, ] (L1) at (0,-1+1) {};
                \node[graph node, ] (L6) at (0,-6+1) {};
                \node[graph node, ] (L2) at (0,-2+1) {};
                \node[graph node, ] (L3) at (0,-3+1) {};
                
                \foreach \x [count=\xi] in {a, b, d, g, h}
                \node[graph node, label=right:\footnotesize $\x$] (R\xi) at (2,-\xi*1.5*1+0.97*1) {};
            
                \node[graph node, ] (R2) at (2,-2.03*1) {};
                \node[graph node, ] (R3) at (2,-3.53*1) {};
                \node[graph node,  ] (R4) at (2,-5.03*1) {};
                \node[graph node, ] (R5) at (2,-6.53*1) {};
                
                \draw (L1) -- (R4);
                \draw (L1) -- (R5);
                \draw (L6) -- (R4);
                \draw (L6) -- (R5);
                \draw (L2) -- (R2);
                \draw (L3) -- (R2);
                \draw (L2) -- (R3);
                \draw (L3) -- (R3);
                \draw (L4) -- (R1);
                \draw (L5) -- (R1);
                \draw (L7) -- (R1);
                \draw (L8) -- (R1);
            
            \end{tikzpicture}
         \caption{}
         \label{fig:3-clauses-2-k22}
     \end{subfigure}
    \caption{Possible structures formed by four clauses in the corresponding bipartite multi-graph. In subfigure (e), there is one $K_{2,2}$ subgraph formed by nodes $(a,b)$, $(d,g)$, $g$, and $h$ and their incident edges. In subfigure (f), there are two $K_{2,2}$ subgraphs; one formed by the nodes $(a,b)$, $(a,d)$, $g$, and $h$ and their incident edges and one formed by the nodes $(a,g)$, $(a,h)$, $b$, and $d$ and their incident edges.}
    \label{fig:k22}
\end{figure}
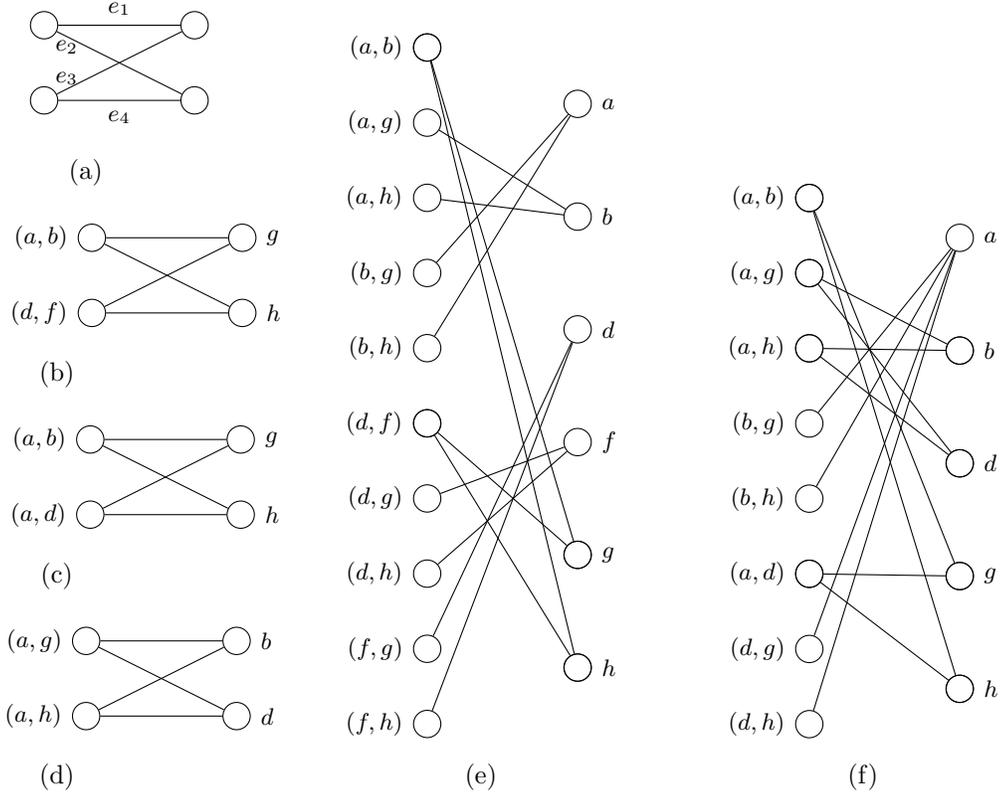

\begin{proof}
Consider a $K_{2,2}$ subgraph $H$ of $G(\ClauseSet)$ with the four edges $e_1$, $e_2$, $e_3$, and $e_4$ as in Figure~\ref{fig:4-edges} and assume that they correspond to clauses $c_1$, $c_2$, $c_3$, and $c_4$, respectively. Then, the two right nodes in $H$ correspond to different literals, e.g., $g$ and $h$. The two left nodes in $H$ may correspond to either two disjoint pairs of literals, e.g. $\{a, b\}$ and $\{d, f\}$ (see Figure~\ref{fig:case-1}) or to two pairs sharing a literal, e.g., $\{a, b\}$ and $\{a, d\}$ (see Figure~\ref{fig:case-2a}). So, using the letters $a$, $b$, $d$, $f$, $g$, $h$ to denote distinct literals, we distinguish between the two cases:

\begin{itemize}
    \item{\bf Case 1.} $c_1=(a,b,g)$, $c_2=(a,b,h)$, $c_3=(d,f,g)$, and $c_4=(d,f,h)$. Then, the twelve edges corresponding to $c_1$, $c_2$, $c_3$, and $c_4$ in $G(\ClauseSet)$ are depicted in Figure~\ref{fig:3-clauses-1-k22}. We observe that the $K_{2,2}$ of Figure~\ref{fig:case-1} is the only $K_{2,2}$ subgraph defined by $c_1$, $c_2$, $c_3$, and $c_4$.

    \item{\bf Case 2.} $c_1=(a,b,g)$, $c_2=(a,b,h)$, $c_3=(a,d,g)$, and $c_4=(a,d,h)$. Now, the twelve edges corresponding to $c_1$, $c_2$, $c_3$, and $c_4$ in $G(\ClauseSet)$ are depicted in Figure~\ref{fig:3-clauses-2-k22}. Observe that they form two $K_{2,2}$ subgraphs in $G(\ClauseSet)$ (the two depicted in Figures~\ref{fig:case-2a} and~\ref{fig:case-2b}).
\end{itemize}
These are the only cases in which four clauses can form $K_{2,2}$ subgraphs in graph $G(\ClauseSet)$.
\end{proof}

\begin{lemma}\label{lem:4-wise-upper-bound-argument}
For any $\Distribution\in\DistFamily_4(n,m)$, it holds that $\Ex[C\sim\Distribution]{\kappa(C)}\leq \frac{81m^4}{64n^6}+\frac{729m^3}{32n^4}$.
\end{lemma}

\begin{proof}
Let $p_1$ and $p_2$ denote the probabilities that a set of specific four clauses of $\ClauseSet\sim\Distribution$ respectively falls into case 1 and 2 from Lemma~\ref{lem:structure} (as $\Distribution\in\DistFamily_4(n,m)$ the probabilities $p_1$ and $p_2$ are the same for any four clauses). By linearity of expectation and 
Lemma~\ref{lem:structure} we have
    \begin{align}\label{eq:upper-bound-for-kappa}
        \Exlong[C\sim \Distribution]{\kappa(C)}&= \binom{m}{4} \cdot (p_1+2\cdot p_2).
    \end{align}
To bound $p_1$, consider an arbitrary ordering $(c_1, c_2, c_3, c_4)$ of the four clauses  (there are $4!=24$ orderings). Given $c_1$ and $c_4$ have different literals (happens with probability close to $1$), there are $9$ ways to select two literals from $c_1$ and one literal from $c_4$ in $c_2$ and then only one way to select the remaining literals in $c_3$. 
Note that there are exactly $4$ orderings $(c_1,c_2,c_3,c_4)$, $(c_2,c_1,c_4,c_3)$, $(c_3,c_4,c_1,c_2)$, and $(c_4,c_1,c_4,c_3)$ of $\{c_1,c_2,c_3,c_4\}$ that meet the conditions of Case $1$ from Lemma~\ref{lem:structure}. 
As we can assume without loss of generality that these four clauses are selected independently (due to $4$-clause independence of $\Distribution$), we have
\begin{align}\label{eq:bound-for-p_1}
    p_1 &\leq 4!\cdot \frac{9}{8\binom{n}{3}}\cdot \frac{1}{8\binom{n}{3}}\cdot \frac{1}{4}=\frac{243}{8n^2(n-1)^2(n-2)^2} \leq \frac{243}{8n^6}+\frac{1215}{4n^7}.
\end{align}
The last inequality follows after verifying that $\frac{1}{(n-1)^2(n-2)^2}\leq \frac{1}{n^4}+\frac{10}{n^5}$ when $n\geq 10$.

We now bound probability $p_2$ that the four clauses meet the conditions of Case 2 from Lemma~\ref{lem:structure}. Let us fix an  ordering $(c_1, c_2, c_3, c_4)$ of the four clauses ($4!=24$ orderings in total). For a given clause $c_1$ there are $3$ choices of the shared literal between $c_1$ and $c_4$, i.e., the probability that $c_4$ meets the conditions of Case 2 is at most $\frac{3\cdot 4\binom{n-1}{2}}{8\binom{n}{3}}$. For fixed $c_1$ and $c_4$, there are only four choices for $c_2$; and for given $c_1$, $c_2$, and $c_4$ there is a unique choice for $c_3$. There are exactly $4$ orderings $(c_1,c_2,c_3,c_4)$, $(c_2,c_1,c_4,c_3)$, $(c_3,c_4,c_1,c_2)$, and $(c_4,c_1,c_4,c_3)$ of $\{c_1,c_2,c_3,c_4\}$ that meet conditions of Case 2. As we can assume without loss of generality that these four clauses are selected independently (due to $4$-clause independence of $\Distribution$), we have
\begin{align}\label{eq:bound-for-p_2}
    p_2 &\le \frac{4!}{4}\cdot \frac{3\cdot 4\binom{n-1}{2}}{8\binom{n}{3}} \cdot \frac{4}{8\binom{n}{3}}\cdot \frac{1}{8\binom{n}{3}} = \frac{243}{4n^3(n-1)^2(n-2)^2}\leq \frac{243}{2n^7},
\end{align}
where the last inequality holds, as $(n-1)^2(n-2)^2\geq \frac{n^4}{2}$ for $n\ge 10$. Finally, by substituting
\eqref{eq:bound-for-p_1} and \eqref{eq:bound-for-p_2} into \eqref{eq:upper-bound-for-kappa} we get
\begin{align*}
    \Exlong[C\sim \Distribution]{\kappa(C)} &\leq \frac{81m^4}{64n^6}+\frac{729m^4}{32n^7} \leq \frac{81m^4}{64n^6}+\frac{729m^3}{32n^4},
\end{align*}
as desired (the last inequality follows, since $m\le \sqrt{10}\cdot n^{5/2}\leq n^3$).
\end{proof}

\paragraph{Lower-bounding $\Ex[\ClauseSet\sim \Distribution]{\kappa(\ClauseSet)}$.} For a $3$-SAT instance $C$ with $m$ clauses over $n$ variables, let $\tilC$ be the maximal subset of clauses in $\ClauseSet$ with removed duplicates (identical clauses). The (simple) graph $G(\tilC)$ is the underlying simple subgraph of the multi-graph $G(\ClauseSet)$. We denote by $\tilm$ the number of clauses in $\tilC$. We next obtain two lower bounds on $\kappa(\tilC)$ in terms of $n$ and $\tilm$: (i) for arbitrary $\tilC$ in Lemmas~\ref{lem:lower-bound-4-wise-any} and better bound (ii) for the case when $\tilC$ is SAT. These bounds only exploit the structure of the graph $G(\tilC)$ and thus
do not depend on any properties of $\Distribution$. 
We then show in Lemma~\ref{lem:tilm-vs-m} how to relate expectation of $\tilm$ ($\tilm\le m$) to $m$.
Finally, we combine all these bounds in Lemma~\ref{lem:lower-bound-for-kappa} to relate $\Ex[\ClauseSet\sim \Distribution]{\kappa(\ClauseSet)}$ with $\Prx[\ClauseSet\sim \Distribution]{SAT(\ClauseSet)}$.

Let us now define some notation used in the proof of both Lemmas~\ref{lem:lower-bound-4-wise-any} and \ref{lem:lower-bound-4-wise-sat}. We denote by $R_2$ the set of all pairs of distinct nodes in the right node set $R$ of graph $G(C)=(R\cup L,E)$. Thus, $|R_2|=\binom{2n}{2}>|L|$. Then, for a pair $(v,v')\in R_2$, let $B_{v,v'}$ denote the set of nodes in $L$ that are adjacent to both nodes $v$ and $v'$ in $R$.

In the proofs of Lemmas~\ref{lem:lower-bound-4-wise-any} and \ref{lem:lower-bound-4-wise-sat}, we extensively use the following observation.

\begin{claim}\label{claim:multiple-concave-functions}
    Let $x_1, ..., x_k\geq 0$. Then $\sum_{i=1}^k{x_i^2}\geq \frac{1}{k}\left(\sum_{i=1}^k{x_i}\right)^2$.
\end{claim}

We are ready to prove our first lower bound on the number $\kappa(\tilC)$.
\begin{lemma}\label{lem:lower-bound-4-wise-any}
    Let $\tilC$ be an instance over $n$ variables which consists of $\tilm$ distinct clauses. Then, 
    $$\kappa(\tilC)\geq \frac{81\tilm^4}{64n^6}-\frac{27\tilm^3}{8n^4}.$$
\end{lemma}

\begin{proof}
Observe that the lemma trivially holds if $\tilm\leq 8n^2/3$, as the RHS of the inequality in its statement is non-positive. In the following, we consider the case $\tilm>8n^2/3$. Notice that this implies that $\tilm>4|R_2|/3$ and $\tilm>4|L|/3$; these facts will be useful later in the proof.

Let $v$ and $v'$ be two (different) nodes in $R$ side of $G(\tilC)$. The number of different $K_{2,2}$ subgraphs in $G(\tilC)$ with vertices $v$ and $v'$ is $\binom{|B_{v,v}|}{2}$. Thus, using Claim~\ref{claim:multiple-concave-functions}, $\kappa(\tilC)$ can be bounded as
\begin{align}\nonumber
\kappa(\tilC) &=\sum_{(v,v')\in R_2}{\binom{|B_{v,v'}|}{2}} = \frac{1}{2}\sum_{(v,v')\in R_2}{|B_{v,v'}|^2}-\frac{1}{2}\sum_{(v,v')\in R_2}{|B_{v,v'}|}\\\label{eq:lower-bound-for-kappa-1}
&\geq \frac{1}{2|R_2|}\left(\sum_{(v,v')\in R_2}{|B_{v,v'}|}\right)^2-\frac{1}{2}\sum_{(v,v')\in R_2}{|B_{v,v'}|}.
\end{align}
Now, notice that the quantity $\sum_{(v,v')\in R_2}{|B_{v,v'}|}$ is the total number of distinct pairs of different nodes in $R$ that the nodes in $L$ have. Thus, denoting the degree of node $u\in L$ by $d_u$ (recall that $\sum_{u\in L}{d_u}=3\tilm$), we get
\begin{align}\nonumber
    \sum_{(v,v')\in R_2}{|B_{v,v'}|} &=\sum_{u\in L}{\binom{d_u}{2}}
    =\frac{1}{2}\sum_{u\in L}{d_u^2}-\frac{1}{2}\sum_{u\in L}{d_u}\\\label{eq:lower-bound-for-kappa-2}
    &\geq \frac{1}{2|L|}\left(\sum_{u\in L}{d_u}\right)^2 - \frac{1}{2}\sum_{u\in L}{d_u}
    =\frac{9\tilm^2}{2|L|}-\frac{3\tilm}{2}.
\end{align}
The inequality follows by Claim~\ref{claim:multiple-concave-functions}. We now aim to substitute $\sum_{(v,v')\in R_2}{|B_{v,v'}|}$ with $\frac{9\tilm^2}{2|L|}-\frac{3\tilm}{2}$ at the RHS of Equation (\ref{eq:lower-bound-for-kappa-1}) and get the desired bound on $\kappa(\tilC)$. However, before doing so, we need to make sure that the RHS of Equation (\ref{eq:lower-bound-for-kappa-1}) is increasing for the range of values Equation (\ref{eq:lower-bound-for-kappa-2}) defines for $\sum_{(v,v')\in R_2}{|B_{v,v'}|}$. Indeed, we have
\begin{align}\label{eq:lower-bound-for-kappa-observation}   
\frac{9\tilm^2}{2|L|}-\frac{3\tilm}{2} &= \frac{3\tilm}{2}\left(\frac{3\tilm}{|L|}-1\right) > 6|R_2|\geq |R_2|/2.
\end{align}
The inequality follows by the facts $\tilm> 4|R_2|/3$ (and, hence, $\frac{3\tilm}{2}\geq 2|R_2|$) and $\tilm>4|L|/3$ (and, hence, $\frac{3\tilm}{|L|}-1>3$). Now, observe that the function $\frac{1}{2|R_2|}x^2-\frac{1}{2}x$ is increasing for $x\geq |R_2|/2$. Thus, due to Equation (\ref{eq:lower-bound-for-kappa-2}) and by our observation in Equation (\ref{eq:lower-bound-for-kappa-observation}), Equation (\ref{eq:lower-bound-for-kappa-1}) yields 
\begin{align*}
\kappa(\tilC) &\geq \frac{1}{2|R_2|}\left(\frac{9\tilm^2}{2|L|}-\frac{3\tilm}{2}\right)^2-\frac{1}{2}\left(\frac{9\tilm^2}{2|L|}-\frac{3\tilm}{2}\right)=\frac{81\tilm^4}{8|R_2||L|^2}\left(1-\frac{|L|}{3\tilm}\right)^2-\frac{9\tilm^2}{4|L|}+\frac{3\tilm}{4}\\
&\geq \frac{81\tilm^4}{8|R_2||L|^2}-\frac{27\tilm^3}{4|R_2||L|}-\frac{9\tilm^2}{4|L|}
\geq \frac{81\tilm^4}{8|R_2||L|^2}-\frac{27\tilm^3}{2|R_2||L|}= \frac{27\tilm^3}{2|R_2||L|}\left(\frac{3\tilm}{4|L|}-1\right)\\
&\geq \frac{27\tilm^3}{8n^4}\left(\frac{3\tilm}{8n^2}-1\right)= \frac{81\tilm^4}{64n^6}-\frac{27\tilm^3}{8n^4},
\end{align*}
as desired. The second inequality follows by the property $(1-x)^2\geq 1-2x$. The third one follows since $\tilm > 4|R_2|/3 > |R_2|/3$ (and, hence, $\frac{9\tilm^2}{4|L|}\leq \frac{27\tilm^3}{4|R_2||L|}$). The last inequality follows by the facts $|R_2|=\binom{2n}{2}<2n^2$ and $|L|=4\binom{n}{2}<2n^2$.
\end{proof}

We now present a second lower bound on the number $\kappa(\tilC)$, specifically for the case of a satisfiable instance $C$.

\begin{lemma}\label{lem:lower-bound-4-wise-sat}
        Let $C$ be a statisfiable instance with $n$ variables and $\tilm$ distinct clauses. Then
    $$\kappa(\tilC) \geq  \frac{82\tilm^4}{64n^6}-\frac{123\tilm^3}{16n^4}.$$
\end{lemma}

\begin{proof}
Observe that the lemma trivially holds if $\tilm\leq 6n^2$, as the RHS of the inequality in its statement is non-positive. In the following, we consider the case $\tilm>6n^2$.

We begin with some additional definitions. Consider an assignment $\sigma$ satisfying $C$ (and, thus, satisfying $\tilC$). Let $L_S$ be the subset of $L$ consisting of nodes identified by two literals, at least one of which is true according to $\sigma$. Let $L_U=L\setminus L_S$. Similarly, let $R_S$ be the subset of $R$ consisting of nodes identified by a literal which is true under $\sigma$ and define $R_U=R\setminus R_S$. Clearly, $|L_S|=3|L|/4$, $|L_U|=|L|/4$, and $|R_S|=|R_U|=|R|/2$. Finally, let $R_2^{SU}$ be the subset of $R_2$ consisting of pairs in $R_S\times R_U$, i.e., having a node from $R_S$ and a node from $R_U$. Clearly, $|R_2^{SU}|=n^2$ and $|R_2\setminus R_2^{SU}|=n^2-n$.

Define $P=\sum_{(v,v')\in R_2^{SU}}{|B_{v,v'}|}$ and $Q=\sum_{(v,v')\in R_2\setminus R_2^{SU}}{|B_{v,v'}|}$. We will bound the number $\kappa(\tilC)$ using 
\begin{align}\nonumber
\kappa(\tilC) &=\sum_{(v,v')\in R_2}{\binom{|B_{v,v'}|}{2}}\\\nonumber 
&= \frac{1}{2}\sum_{(v,v')\in R_2^{SU}}{|B_{v,v'}|^2}-\frac{1}{2}\sum_{(v,v')\in R_2^{SU}}{|B_{v,v'}|}+\frac{1}{2}\sum_{(v,v')\in R_2\setminus R_2^{SU}}{|B_{v,v'}|^2}-\frac{1}{2}\sum_{(v,v')\in R_2\setminus R_2^{SU}}{|B_{v,v'}|}\\\nonumber
&\geq \frac{1}{2|R_2^{SU}|}\left(\sum_{(v,v')\in R_2^{SU}}{|B_{v,v'}|}\right)^2-\frac{1}{2}\sum_{(v,v')\in R_2^{SU}}{|B_{v,v'}|}\\\nonumber
&\quad +\frac{1}{2|R\setminus R_2^{SU}|}\left(\sum_{(v,v')\in R_2\setminus R_2^{SU}}{|B_{v,v'}|}\right)^2-\frac{1}{2}\sum_{(v,v')\in R_2\setminus R_2^{SU}}{|B_{v,v'}|}\\\nonumber
&\geq \frac{1}{2n^2}(P^2+Q^2)-\frac{1}{2}(P+Q)\\\label{eq:lower-bound-for-kappa-sat-1}
&= \frac{1}{4n^2}(P+Q)(P+Q-2n^2)+\frac{1}{4n^2}(Q-P)^2.
\end{align}
The first inequality follows by Claim~\ref{claim:multiple-concave-functions} and the second one by the fact $|R_2\setminus R_2^{SU}|\leq |R_2^{SU}|=n^2$.

For a node $u\in L_S$, denote by $\alpha_u$ and $\beta_u$ the number of nodes in $R_S$ and $R_U$ that are adjacent to node $u$ in $G(\tilC)$. For a node $u\in L_U$, denote by $\gamma_u$ the number of nodes in $R_S$ that are adjacent to node $u$ in $G(\tilC)$; notice that $u$ cannot be adjacent to any node of $R_U$ as such an edge would correspond to an unsatisfied clause under $\sigma$. These definitions imply that
\begin{align}\label{eq:lower-bound-for-kappa-sat-3m}
    \sum_{u\in L_S}{(\alpha_u+\beta_u)}+\sum_{u\in L_U}{\gamma_u} &= 3\tilm.
\end{align}

Observe that for each node $u\in L_S$, there are $\alpha_u\beta_u$  node pairs in $R_2^{SU}$ and $\binom{\alpha_u}{2}+\binom{\beta_u}{2}$ node pairs in $R_2 \setminus R_2^{SU}$ to which node $u$ is adjacent. Also, for each node $u\in L_U$, there are $\binom{\gamma_u}{2}$ node pairs in $R_2\setminus R_2^{SU}$ to which node $u$ is adjacent. Thus, using the definition of $B_{v,v'}$, we get
\begin{align}\label{eq:lower-bound-for-kappa-sat-P}
    P&=\sum_{(v,v')\in R_2^{SU}}{|B_{v,v'}|} = \sum_{u\in L_S}{\alpha_u\beta_u}\\\label{eq:lower-bound-for-kappa-sat-Q}
    Q&=\sum_{(v,v')\in R_2\setminus R_2^{SU}}{|B_{v,v'}|} = \sum_{u\in L_S}{\left(\binom{\alpha_u}{2}+\binom{\beta_u}{2}\right)}+\sum_{u\in L_U}{\binom{\gamma_u}{2}}.
\end{align}
To complete the proof, we will bound the RHS of Equation (\ref{eq:lower-bound-for-kappa-sat-1}) with $P$ and $Q$ defined as in Equations (\ref{eq:lower-bound-for-kappa-sat-P}) and (\ref{eq:lower-bound-for-kappa-sat-Q}), under the constraint imposed by Equation (\ref{eq:lower-bound-for-kappa-sat-3m}). 

Observe that 
\begin{align}
    \nonumber
    P+Q&=\sum_{u\in L_S}\InParentheses{\binom{\alpha_u}{2}+\binom{\beta_u}{2}+\alpha_u\beta_u}+\sum_{u\in L_U}\binom{\gamma_u}{2}\\
    \nonumber
    &=\frac{1}{2}\sum_{u\in L_S}(\alpha_u+\beta_u)^2+\frac{1}{2}\sum_{u\in L_U}\gamma_u^2-\frac{1}{2}\sum_{u\in L_S}(\alpha_u+\beta_u)-\frac{1}{2}\sum_{u\in L_U}\gamma_u\\
    \nonumber
    &\geq \frac{1}{2|L_S|}\left(\sum_{u\in L_S}(\alpha_u+\beta_u)\right)^2+\frac{1}{2|L_U|}\left(\sum_{u\in L_U}\gamma_u\right)^2-\frac{1}{2}\sum_{u\in L_S}(\alpha_u+\beta_u)-\frac{1}{2}\sum_{u\in L_U}\gamma_u\\
    \nonumber
    &=\frac{1}{2|L_S|}\left(3\tilm-\sum_{u\in L_U}\gamma_u\right)^2+\frac{1}{2|L_U|}\left(\sum_{u\in L_U}\gamma_u\right)^2-\frac{3\tilm}{2}\\
    \nonumber
    &\geq\frac{1}{3n^2}\left(3\tilm-\sum_{u\in L_U}\gamma_u\right)^2+\frac{1}{n^2}\left(\sum_{u\in L_U}\gamma_u\right)^2-\frac{3\tilm}{2}\\
    \label{eq:lower-bound-for-kappa-sat-P-plus-Q}
    &=\frac{4}{3n^2}\left(\sum_{u\in L_U}\gamma_u\right)^2-\frac{2\tilm}{n^2}\sum_{u\in L_U}\gamma_u+\frac{3\tilm^2}{n^2}-\frac{3\tilm}{2}.
\end{align}
The first inequality is due to Claim \ref{claim:multiple-concave-functions}, the third equation is due to Equation \eqref{eq:lower-bound-for-kappa-sat-3m}, and the second inequality is due to the facts $|L_S|=\frac{3}{4}|L|$, $|L_U|=\frac{1}{4}|L|$, and $|L|=4\binom{n}{2}<2n^2$. I.e., for the quadratic function $f(x)\eqdef\frac{4}{3n^2}x^2-\frac{2\tilm}{n^2}x+\frac{3\tilm^2}{n^2}-\frac{3\tilm}{2}$ the equation \eqref{eq:lower-bound-for-kappa-sat-P-plus-Q} reads as $P+Q\ge f(x_0)$ for $x_0=\sum_{u \in L_U} \gamma_u$. This function will also be useful later. 
Similarly, we have
\begin{align}
    \nonumber
    Q-P&=\sum_{u\in L_S}\InParentheses{\binom{\alpha_u}{2}+\binom{\beta_u}{2}-\alpha_u\beta_u}+\sum_{u\in L_U}\binom{\gamma_u}{2}\\
    \nonumber
    &=\frac{1}{2}\sum_{u\in L_S}(\alpha_u-\beta_u)^2+\frac{1}{2}\sum_{u\in L_U}\gamma_u^2-\frac{1}{2}\sum_{u\in L_S}(\alpha_u+\beta_u)-\frac{1}{2}\sum_{u\in L_U}\gamma_u\\
    \nonumber
    &\geq \frac{1}{2}\sum_{u\in L_U}\gamma_u^2-\frac{1}{2}\sum_{u\in L_S}(\alpha_u+\beta_u)-\frac{1}{2}\sum_{u\in L_U}\gamma_u\\
    \nonumber
    &\geq \frac{1}{2|L_U|}\left(\sum_{u\in L_U}\gamma_u\right)^2-\frac{1}{2}\sum_{u\in L_U}(\alpha_u+\beta_u)-\frac{1}{2}\sum_{u\in L_U}\gamma_u\\
    \label{eq:lower-bound-for-kappa-sat-Q-minus-P}
    &=\frac{1}{2|L_U|}\left(\sum_{u\in L_U}\gamma_u\right)^2-\frac{3\tilm}{2}\geq \frac{1}{n^2}\left(\sum_{u\in L_U}\gamma_u\right)^2-\frac{3\tilm}{2}.
\end{align}

We conclude the proof by considering two cases for $x_0=\sum_{u \in L_U} \gamma_u$.

\paragraph{Case 1: $x_0=\sum_{u \in L_U}\gamma_u \geq \frac{\tilm}{2}$.} Notice that $f(x)$ is minimized for $x=\frac{3\tilm}{4}$ at the value $\frac{9\tilm^2}{4n^2}-\frac{3\tilm}{2}$, which is greater than $n^2$ due to the fact $\tilm>6n^2$. This observation together with Equation \eqref{eq:lower-bound-for-kappa-sat-P-plus-Q} yields $P+Q\ge f(x_0)>n^2$. Now notice that the quadratic function $x(x-2n^2)$ is increasing for $x>n^2$, and using the bound $P+Q\ge f(x_0)\ge \frac{9\tilm^2}{4n^2}-\frac{3\tilm}{2}$, we get

\begin{align}
    \label{eq:lower-bound-for-kappa-sat-P-plus-Q-case-1}
    (P+Q)(P+Q-2n^2)\geq \InParentheses{\frac{9\tilm^2}{4n^2}-\frac{3\tilm}{2}}\InParentheses{\frac{9\tilm^2}{4n^2}-\frac{3\tilm}{2}-2n^2}\geq \frac{81\tilm^4}{16n^4}-\frac{27\tilm^3}{4n^2}-\frac{9\tilm^2}{4}.
\end{align}
As $x_0\ge \tilm/2$, \eqref{eq:lower-bound-for-kappa-sat-Q-minus-P} yields $Q-P\geq \frac{\tilm^2}{4n^2}-\frac{3\tilm}{2}$, which is positive because $\tilm > 6n^2$. Thus,
\begin{align}
    \label{eq:lower-bound-for-kappa-sat-Q-minus-P-case-1}
    (Q-P)^2 \geq \left(\frac{\tilm^2}{4n^2}-\frac{3\tilm}{2}\right)^2=\frac{\tilm^4}{16n^4}-\frac{3\tilm^3}{4\tilm^2}+\frac{9\tilm^2}{4}.
\end{align}
By equations \eqref{eq:lower-bound-for-kappa-sat-1}, \eqref{eq:lower-bound-for-kappa-sat-P-plus-Q-case-1}, and \eqref{eq:lower-bound-for-kappa-sat-Q-minus-P-case-1}, we get
\begin{align*}
    \kappa(\tilC)\geq \frac{82\tilm^4}{64n^6}-\frac{15\tilm^3}{8n^4}\geq\frac{82\tilm^4}{64n^6}-\frac{123\tilm^3}{16n^4}
\end{align*}
as desired.

\paragraph{Case 2: $x_0=\sum_{u \in L_U} \gamma_u \leq \frac{\tilm}{2}$.} Notice that the quadratic function $f(x)$ is decreasing in $[0,\frac{3\tilm}{4})$. Thus, $f(x_0)\ge\frac{7\tilm^2}{3n^2}-\frac{3\tilm}{2}$, which is higher than $n^2$, as $\tilm>6n^2$. Hence, $P+Q\ge f(x_0)>n^2$. As the quadratic function $x(x-2n^2)$ is increasing for $x>n^2$ and $P+Q \geq \frac{7\tilm^2}{3n^2}-\frac{3\tilm}{2}>n^2$, we get

\begin{align}
    \nonumber
    (P+Q)(P+Q-2n^2)&\geq \left(\frac{7\tilm^2}{3n^2}-\frac{3\tilm}{2}\right)\left(\frac{7\tilm^2}{3n^2}-\frac{3\tilm}{2}-2n^2\right)\\
    \label{eq:lower-bound-for-kappa-sat-P-plus-Q-case-2}
    &\geq\frac{49\tilm^4}{9n^4}-\frac{7\tilm^3}{n^2}-\frac{29\tilm^2}{12}\geq\frac{82\tilm^4}{16n^4}-\frac{123\tilm^3}{4n^2}.
\end{align} 

The last inequality holds as $\tilm>6n^2$. The lemma now follows by Equations \eqref{eq:lower-bound-for-kappa-sat-1} and \eqref{eq:lower-bound-for-kappa-sat-P-plus-Q-case-2}.
\end{proof}

The next lemma relates $\tilm$ with $m$ and $n$.
\begin{lemma}\label{lem:tilm-vs-m}
    $\Ex[C\sim \Distribution]{\tilm^4} \geq m^4-\frac{125}{6}\cdot m^3\cdot n^2$.
\end{lemma}

\begin{proof}
    For two clauses $c$ and $c'$ of $C$, let $X(c,c')$ be the binary random variable indicating whether $c=c'$ (then $X(c,c')=1$) or not (then, $X(c,c')=0$).     
    We first observe that 
    \begin{equation}
    \label{eq:m_tilm_count}
        m \le \tilm + \sum_{\{c,c'\}\in\binom{\ClauseSet}{2}}X(c,c').
    \end{equation} Indeed, let  $k=k(\theta)$ be the multiplicity of each clause type $\theta$ in $\ClauseSet$, then the left hand side $m=\sum_{\theta}k(\theta)$. For any value of $k$, the type $\theta$ contributes at least $k(\theta)$ to the right hand side of \eqref{eq:m_tilm_count}: (i) for $k=1$ we count one $\theta$ in $\tilm$; (ii) for $k=2$ we count $\theta$ once in $\tilm$ and another time in $X(c,c')$ for $c=\theta=c'$; (iii) for $k\ge 3$ the terms $X(c,c')$ with $c=\theta$ and $c'=\theta$ contribute $\binom{k}{2}\ge k$ to the right hand side of \eqref{eq:m_tilm_count}. After taking expectation over $\ClauseSet\sim\Distribution$, we get
    \begin{align*}
    \Exlong[\ClauseSet\sim\Distribution]{\tilm} &\ge 
    m-\sum_{\{c,c'\}}\Prlong[\ClauseSet\sim\Distribution]{c=c'}=
    m-\binom{m}{2}\cdot \frac{1}{8\binom{n}{3}}\ge m-\frac{3m^2}{8n(n-1)(n-2)}\geq m-\frac{25m^2}{48n^3}\\
        &=m\left(1-\frac{25m}{48n^3}\right).
    \end{align*}

    Thus, using Jensen's inequality, the fact $(1-x)^4\geq 1-4x$, and the assumption $m\leq \sqrt{10}\cdot n^{5/2}$, we have
    \begin{align*}
        \Exlong[\ClauseSet\sim\Distribution]{\tilm^4}&\geq \Exlong[\ClauseSet\sim\Distribution]{\tilm}^4 \geq m^4\left(1-\frac{25m}{48n^3}\right)^4 \geq m^4-\frac{25m^5}{48n^3} \geq m^4-\frac{125}{6}\cdot m^3n^2,
    \end{align*}
    as desired. 
\end{proof}
We prove a lower bound on $\Ex{\kappa(\ClauseSet)}$ using the last three lemmas.
\begin{lemma}\label{lem:lower-bound-for-kappa}
    For any $\Distribution\in\DistFamily_4(n,m)$, we have $\Ex[C\sim \Distribution]{\kappa(C)}\geq \frac{81m^4}{64n^6}-\frac{1215m^3}{32n^4}+\frac{m^4}{64n^6}\cdot \Prx[\ClauseSet\sim\dist]{SAT(\ClauseSet)}$.    
\end{lemma}

\begin{proof}
We prove the lemma with the following derivation:
    \begin{align*}
        \Exlong[\ClauseSet\sim\Distribution]{\kappa(\ClauseSet)}&\geq \Exlong[\ClauseSet\sim\Distribution]{\kappa(\tilC)} \\
        &=\Exlong[\ClauseSet\sim\Distribution]{\kappa(\tilC)|SAT(\ClauseSet)}\cdot \Prlong[\ClauseSet\sim\Distribution]{SAT(\ClauseSet)}\\
        &\quad +\Exlong[\ClauseSet\sim\Distribution]{\kappa(\tilC)|\neg SAT(\ClauseSet))}\cdot \Prlong[\ClauseSet\sim\Distribution]{\neg SAT(\ClauseSet)}\\
        &\geq \Exlong[\ClauseSet\sim\Distribution]{\frac{82\tilm^4}{64n^6}-\frac{123\tilm^3}{16n^4}\bigg|SAT(\ClauseSet)}\cdot \Prlong[\ClauseSet\sim\Distribution]{SAT(\ClauseSet)}\\
        &\quad + \Exlong[\ClauseSet\sim\Distribution]{\frac{81\tilm^4}{64n^6}-\frac{27\tilm^3}{8n^4}\bigg|\neg SAT(\ClauseSet)}\cdot \Prlong[\ClauseSet\sim\Distribution]{\neg SAT(\ClauseSet)}\\
        &\geq \Exlong[\ClauseSet\sim\Distribution]{\frac{82\tilm^4}{64n^6}-\frac{123\tilm^3}{16n^4}}-\Exlong[\ClauseSet\sim\Distribution]{\frac{\tilm^4}{64n^6}\bigg|\neg SAT(\ClauseSet)}\cdot \Prlong[\ClauseSet\sim\Distribution]{\neg SAT(\ClauseSet)}\\
        &\geq \frac{82}{64n^6}\Exlong[\ClauseSet\sim\Distribution]{\tilm^4}-\frac{123m^3}{16n^4}-\frac{m^4}{64n^6}\cdot\Prlong[\ClauseSet\sim\Distribution]{\neg SAT(\ClauseSet)}\\
        &\geq \frac{82m^4}{64n^6}-\frac{861m^3}{32n^4}-\frac{123m^3}{16n^4}-\frac{m^4}{64n^6}\cdot\Prlong[\ClauseSet\sim\Distribution]{\neg SAT(\ClauseSet)}\\
        &= \frac{81m^4}{64n^6}-\frac{1215m^3}{32n^4}+\frac{m^4}{64n^6}\cdot\Prlong[\ClauseSet\sim\Distribution]{SAT(\ClauseSet)},
    \end{align*}
    as desired. The second inequality follows by Lemmas~\ref{lem:lower-bound-4-wise-sat} and~\ref{lem:lower-bound-4-wise-any}, the fourth one by the fact $\tilm\leq m$, and the fifth one by Lemma~\ref{lem:tilm-vs-m}.
\end{proof}

\paragraph{Putting everything together.} Lemmas~\ref{lem:4-wise-upper-bound-argument} and~\ref{lem:lower-bound-for-kappa} yield
\begin{align*}
    \frac{81m^4}{64n^6}-\frac{1215m^3}{32n^4}+\frac{m^4}{64n^6}\cdot \Prlong[\ClauseSet\sim\Distribution]{SAT(\ClauseSet)} &\leq \Exlong[\ClauseSet\sim\Distribution]{\kappa(C)} \leq \frac{81m^4}{64n^6}+\frac{729m^3}{32n^4},
\end{align*}
which implies the desired bound $\Prx[\ClauseSet\sim\dist]{SAT(\ClauseSet)}\leq \frac{4288\cdot n^2}{m}$, completing the proof of Theorem~\ref{thm:upper-bound-result-4wise}.
\end{proof}

\section{Bounds on the lower satisfiability threshold}
    \label{sec:SmallUnsat-proof}
    In this section, we present our upper bound on the lower satisfiability threshold and explain why we believe that it is the tight bound for every degree of independence. 
\begin{theorem}
For every integer $k\geq 2$, $\LST_k(n)=\Omega(n^{1-1/k})$.
    \label{thm:lower-bound-k-wise-unsat}
\end{theorem}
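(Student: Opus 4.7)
The plan is to show that the $3$-uniform variable hypergraph $H(\phi)$ of a random $\phi$ drawn from any $\Distribution \in \DistFamily_k(n,m)$ with $m \le c \cdot n^{1-1/k}$ (for a suitable absolute constant $c$) is Berge-acyclic with probability at least $2/3$, and then to observe that Berge-acyclicity alone already forces $\phi$ to be satisfiable. Here $H(\phi)$ is the multi-hypergraph whose hyperedges are the variable triples $\GetVariable(c)$ for clauses $c\in\ClauseSet$, and a Berge-cycle of length $\ell$ is a cyclic sequence of $\ell$ distinct hyperedges $e_1,\dots,e_\ell$ and $\ell$ distinct vertices $v_1,\dots,v_\ell$ with $v_i\in e_i\cap e_{i+1}$ (indices mod~$\ell$).

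The SAT implication is a peeling argument on the bipartite incidence graph of variables and clauses: Berge-acyclicity of $H(\phi)$ is the same as saying this incidence graph is a forest, and each clause vertex has degree exactly $3$, so every leaf of a nonempty component is a variable $v$ appearing in a unique clause $c$. Inductively satisfy $\ClauseSet\setminus\{c\}$ (whose incidence graph is still a forest), then set $v$ so that the literal of $v$ inside $c$ is true. Signs of literals never enter this argument, which is why the proof ignores the sign distribution and examines only the variable-hypergraph.

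For the probability bound I would use a first-moment count on Berge-cycles. Let $N_\ell$ denote the number of length-$\ell$ Berge-cycles in $H(\phi)$. For $2\le \ell\le k$, $N_\ell$ is a sum of indicators each depending on at most $\ell\le k$ clauses, so by~\eqref{eq:exp-between-k-wise-and-ind.} its expectation under $\Distribution$ matches that under $\Distribution_{\text{Ind.}}$. A standard count (ordered choice of $\ell$ distinct cycle-vertices in $O(n^\ell)$ ways, ordered choice of $\ell$ distinct cycle-clauses in $O(m^\ell)$ ways, each clause contains its two prescribed consecutive cycle-vertices with probability $\Theta(1/n^2)$, divide by $2\ell$ for cyclic shifts and reflection) yields $\Ex[\phi\sim \Distribution]{N_\ell}=O\!\left((6m/n)^\ell/\ell\right)$. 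For $m\le c\cdot n^{1-1/k}$, the geometric sum $\sum_{\ell=2}^{k}(6m/n)^\ell$ is dominated by its $\ell=2$ term and evaluates to $O(c^2/n^{2/k})$, which is at most $1/3$ for small enough $c$ and large enough $n$; Markov's inequality then rules out every Berge-cycle of length in $\{2,\dots,k\}$ with probability at least $2/3$.

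The step I expect to be the main obstacle is ruling out Berge-cycles of length $\ell > k$, since $k$-clause independence does not directly control the joint distribution of the $\ell$ cycle-clauses and a naive linearity-of-expectation argument is no longer available at those lengths. The way I would close this gap is to exploit the sparsity $m=O(n^{1-1/k})$ structurally: any Berge-cycle of length $\ell$ requires at least $\ell$ distinct variables of hypergraph-degree at least $2$, and the expected number of such heavy variables is a $2$-clause statistic that remains faithful under any $k\ge 2$-wise independent distribution via~\eqref{eq:exp-between-k-wise-and-ind.}. Combined with the first-moment bound on short Berge-cycles, this forces $H(\phi)$ to be Berge-acyclic with probability at least $2/3$ once $c$ is chosen appropriately. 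The matching between the sparsity exponent $1-1/k$ of $m$ and the independence degree $k$ enters precisely here, which is what fixes the lower threshold at $n^{1-1/k}$.
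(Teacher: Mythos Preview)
Your setup matches the paper: build the variable hypergraph, observe that Berge-acyclicity implies satisfiability via a peeling argument, and handle Berge-cycles of length $\ell\le k$ by a first-moment bound whose expectation is preserved under $k$-wise independence via~\eqref{eq:exp-between-k-wise-and-ind.}.

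The gap is in your treatment of cycles of length $\ell>k$. The heavy-variable idea does not close it for $k\ge 3$. The only quantity in that paragraph that is genuinely a low-order clause statistic is the number of \emph{pairs of clauses sharing a variable}; its expectation is $\Theta(m^2/n)=\Theta(n^{1-2/k})$ when $m=\Theta(n^{1-1/k})$, which diverges for every $k\ge 3$, so Markov on ``at least $k+1$ heavy variables'' gives nothing. Note also that $\ind{d_v\ge 2}$ is not itself a $2$-clause statistic---it depends on all $m$ clauses through $d_v$---so its expectation is not even determined by $k$-wise independence. More to the point, your long-cycle step never invokes any $k$-clause statistic, so nothing in it ties the bound to the independence degree $k$; the asserted ``matching of the exponent $1-1/k$ with $k$'' is not actually produced by that argument.

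The paper's fix is a one-line structural observation you are missing: every Berge-cycle of length $\ell>k$ contains a Berge \emph{path} of length $k$ (take any $k$ consecutive hyperedges of the cycle). Hence the event ``some cycle of length $>k$'' is contained in ``some $k$-path exists'', and the number of $k$-paths \emph{is} a $k$-clause statistic, with expectation $O(m^k/n^{k-1})$ under $\Distind$ and therefore under every $\Distribution\in\DistFamily_k(n,m)$. Requiring this to be $O(1)$ is precisely the condition $m=O(n^{1-1/k})$, which is where the threshold exponent genuinely enters.
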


\begin{proof}
We prove the theorem by showing that for every integers $n\geq 3$ and $m\leq \frac{1}{12}\cdot n^{1-1/k}$, any $k$-clause independent distribution $\Distribution\in \DistFamily_k(n,m)$ satisfies $\Prx[C\sim \Distribution]{\neg SAT(C)}\leq 1/3$. 

Let $G(\ClauseSet)=(V,E)$ be a $3$-uniform hypergraph defined by an instance $\ClauseSet$ drawn from a $k$-clause independent distribution $\Distribution$, $V=X(n)$ and $E=\set{\GetVariable(c)}_{c\in \ClauseSet}$, i.e., every node in $G$ represents a variable and every hyperegde in $G$ represents the variable triplet of a clause. We consider simple \emph{Berge-cycles} (or, simply, cycles) of length $\ell \ge 2$. A cycle of length $\ell>2$ is defined as a set of $\ell$ distinct hyperedges $e_1, e_2, \dots, e_\ell \in E$ such that pairs of consecutive edges share exactly one vertex ($|e_i\cap e_{i+1}|=1$ and $|e_\ell\cap e_1|=1$) and all other pairs of hyperedges are disjoint. In a cycle of length $\ell=2$, the two hyperedges $e_1$ and  $e_2$ have at least two vertices in common. We similarly define a path of length $\ell\ge 2$, where pairs of consecutive edges share exactly one vertex ($|e_i\cap e_{i+1}|=1$ for $i\in [\ell-1]$) and all other pairs of hyperedges are disjoint. We observe that any unsatisfiable instance must contain a subgraph $H$ in $G$ such that every variable in $H$ appears in at least two hyperedges of $H$, which in turn implies that $G$ has a cycle. That translates into the following upper bound on the probability that instance $\ClauseSet\sim\Distribution$ is not satisfiable:
\begin{align}\nonumber
    \Prlong[\ClauseSet\sim\Distribution]{\neg SAT(\ClauseSet)}&\le
    \Prlong[\ClauseSet\sim\Distribution]{\exists \text{ cycle in } G(\ClauseSet)}\le
    \sum_{\ell\ge 2}\Exlong[\ClauseSet\sim\Distribution]{\cycle(G(\ClauseSet))}\\\nonumber
    &\le \Exlong[\ClauseSet\sim\Distribution]{\kpath(G(\ClauseSet))}
    +\sum_{\ell\ge 2}^{k-1}\Exlong[\ClauseSet\sim\Distribution]{\cycle(G(\ClauseSet))}\\\label{eq:prob_unsat_via_expect}
    &=
    \Exlong[\ClauseSet\sim\Distind]{\kpath(G(\ClauseSet))}+\sum_{\ell\ge 2}^{k-1}\Exlong[\ClauseSet\sim\Distind]{\cycle(G(\ClauseSet))}.
\end{align}
In the above derivation, $\cycle(G(\ClauseSet))$ counts the number of distinct cycles of length $\ell$ in $G$, and $\kpath(G(\ClauseSet))$ counts the number of distinct paths of lengths $k$ in $G$. For each ordered tuple of $k$  hyperedges in $G$ (there are $k!\cdot \binom{m}{k}$ such orderings), we can easily calculate the probability that they form a path. Thus,
\begin{equation}
    \label{eq:expect_path}
    \Exlong[\ClauseSet\sim\Distind]{\kpath(G(\ClauseSet))}=k!\cdot\binom{m}{k}\cdot\frac{3\cdot\binom{n-3}{2}}{\binom{n}{3}}\cdot\prod_{t=3}^{k}\frac{2\cdot\binom{n-2t+1}{2}}{\binom{n}{3}}\le
    m^k\cdot\frac{9}{n}\cdot\left(\frac{6}{n}\right)^{k-2}.
\end{equation}
For each ordering of $k$ edges, the first edge $e_1$ can be chosen arbitrarily; the second edge has exactly one vertex in common with $e_1$ and the remaining two vertices are chosen from $[n]\setminus e_1$ vertices; each of the remaining edges $e_t$ for $t\ge 3$ has exactly one of the two vertices of $e_{t-1}$ different from $e_{t-1}\cap e_{t-2}$ and has two other vertices chosen from $[n]\setminus(e_1\cup\ldots\cup e_{t-1})$. The upper bound follows after simplifying each of the terms. 
We similarly derive an upper bound on $\Ex{\cycle(G(\ClauseSet))}$ for $\ell\ge 3$ as follows:
\begin{equation*}
    \Exlong[\ClauseSet\sim\Distind]{\cycle(G(\ClauseSet))}\le\frac{1}{6}\cdot\ell!\cdot\binom{m}{\ell}\cdot\left[\frac{3\cdot\binom{n-3}{2}}{\binom{n}{3}}\cdot\prod_{t=3}^{\ell-1}\frac{2\cdot\binom{n-2t+1}{2}}{\binom{n}{3}}\right]\cdot\frac{4\cdot(n-2\ell+1)}{\binom{n}{3}},
\end{equation*}
where for each ordering of $\ell$ edges (now $2\ell\ge 6$ different orderings correspond to the same cycle), the probabilities to choose the first $\ell-1$ edges can be computed in the same way as for $\kpath$; for the last hyper-edge $e_{\ell}$, there are $4=2\cdot 2$ ways to select a vertex of $e_{\ell-1}$ together with a vertex of $e_1$, and $n-2\ell+1$ choices for the new vertex in $[n]\setminus(e_1\cup\ldots\cup e_{\ell-1})$. Hence,
\begin{equation}
\label{eq:expect_cycle}
    \Exlong[\ClauseSet\sim\Distind]{\cycle(G(\ClauseSet))}\le
    m^{\ell}\cdot\left[\frac{9}{n}\cdot\left(\frac{6}{n}\right)^{\ell-3}\right]\cdot\frac{4}{n^2}=\frac{1}{6}\left(\frac{6m}{n}\right)^{\ell}.
\end{equation}
For $\ell =2$ we have 
\begin{equation}
\label{eq:expect_two_cycle}
    \Exlong[\ClauseSet\sim\Distind]{\cycle[2](G(\ClauseSet))}=
    \binom{m}{2}\cdot\frac{3\cdot(n-3)+1}{\binom{n}{3}}=\frac{3 m\cdot (m-1)\cdot (3n-8)}{n\cdot (n-1)\cdot (n-2)}\le\left(\frac{3 m}{n}\right)^2.
\end{equation}
We conclude the proof by plugging estimates \eqref{eq:expect_path},\eqref{eq:expect_cycle}, and \eqref{eq:expect_two_cycle} into \eqref{eq:prob_unsat_via_expect}.
\begin{align*}
    \Prx{\neg SAT(\ClauseSet)} &\le\frac{2}{3}\frac{(6m)^k}{n^{k-1}}+
    \left(\frac{3 m}{n}\right)^2+\frac{1}{6}\sum_{\ell=3}^{k-1}\left(\frac{6 m}{n}\right)^{\ell}\\
    &\le\frac{2}{3\cdot 2^k}+
    \frac{1}{16}+\frac{1}{6}\cdot\left(\frac{1}{2^3}+\frac{1}{2^4}+\ldots+\frac{1}{2^{k-1}}\right)<\frac{1}{3}.  
\end{align*}
The second inequality follows since $m\le \frac{1}{12}\cdot n^{1-1/k}$.
\end{proof}

    \subsection{On the tightness of the $O(n^{1-1/k})$ bound for $\LST_k(n)$} 
Theorem~\ref{thm:lower-bound-k-wise-unsat} says that $k$-wise independence of $\Distribution\in\DistFamily_k(n,m)$ is enough to guarantee satisfiability of a random formula $\ClauseSet\sim\Distribution$ for the number of clauses $m$ of order $n^{1-1/k}$. On the other hand, for the mutually independent distribution $\Distind$ a random formula is unsatisfiable with high probability for $m=O(n)$. Furthermore, our analysis in Theorem~\ref{thm:lower-bound-k-wise-unsat} is essentially tight and it seems unlikely that there is a better bound than $m=O(n^{1-1/k})$. We discuss below why this is  the case, by outlining a plausible  way for constructing $k$-clause independent distribution $\Distribution\in\DistFamily_k(n,m)$ with $\Prx[\ClauseSet\sim\Distribution]{\neg SAT(C)}\ge \frac{2}{3}$ and $m=\Theta(n^{1-1/k})$. 

\paragraph{Informal outline of the construction.} In our construction of the $k$-clause independent distribution $\Distribution$ we are only concerned with the distribution of clauses over variables, as all literals will be assigned uniformly at random and independently. Then, a sufficient condition for a random formula $\ClauseSet\sim\Distribution$ to be unsatisfiable with large probability is that the $3$-uniform hypergraph $G(\ClauseSet)$ constructed in Theorem~\ref{thm:lower-bound-k-wise-unsat} has a dense subgraph, i.e., a subgraph on $|V(H)|$ vertices (corresponding to variables in $\ClauseSet$) with at least $|E(H)|\ge 100\cdot|V(H)|$ hyperedges.
Indeed, one can simply count the expected number of satisfying assignments of a random formula $\phi_{H}$ on $V(H)$ variables and $|E(H)|$ fixed clauses with randomly assigned literals: initially, all $2^{|V(H)|}$ assignment are satisfying, but then, as we add $|E(H)|$ clauses one-by-one, the expected number of satisfying assignments reduces each time exactly by a factor $7/8$ (each satisfying assignment disappears with probability $1/8$). At the end, we get that the expected number of satisfying assignments is $\left(\frac{7}{8}\right)^{100|V(H)|}\cdot 2^{|V(H)|}<0.01$, which means that $\phi$ is unsatisfiable most of the times. 

Now, we want to make sure that our hypergraph $G(\ClauseSet)$ often has such a ``dense'' subgraph $H$, to get an unsatisfiable random formula. Notice that $H$ needs only have a constant number of vertices. Also, 
note that according to the analysis in Theorem~\ref{thm:lower-bound-k-wise-unsat}, we need to avoid any cycles of length smaller than or equal to $k$, as the probability of having such a cycle is of order $o(1)$ in $G(\ClauseSet)$ for  $m=c\cdot n^{1-1/k}$ and any $k$-clause independent distribution $\ClauseSet\sim\DistFamily_k(n,m)$. Luckily, there are many construction of such $3$-uniform hypergraphs $H$ on $|V(H)|=O(1)$ vertices with large number of hyperedges $|E(H)|\ge 100|V(H)|$, and also of large girth $g(H)\ge k+1$ (e.g., see~\cite{EllisL14}). We shall ``plant'' $H$ (essentially insert $H$ as a connected component into $G$) with large probability in our construction. 
However, we need to make sure that by inserting $H$ into our graph $G$, we still have enough room to match $\Prlong{c_i=t_i, \forall i\in S}= \prod_{i\in S} \Prx{c_i=t_i}$ for all $S: |S|\le k$. Next, we give a high level idea how one could achieve this. 

As usual for the construction of distributions with identical marginals, we symmetrize $\Distribution$ over all possible permutations of variables in $\ClauseSet\sim\Distribution$. Then, our goal is to match the expected numbers for each   isomorphism class of configurations of $k$ hyperedges in $G(\ClauseSet)$ for $\ClauseSet\sim\Distribution$ with $\ClauseSet\sim\Distind$. It is 
useful to take note of the structure of typical hypergraph $G(\ClauseSet)$ for $\ClauseSet\sim\Distind$ when $m=c\cdot n^{1-1/k}$: it consists of connected components, each of which is a tree of size at most $k$; the number of connected components of size $k$ 
is a constant that grows slightly faster than linearly in $c$ and, more generally, the number of connected components of size $k-j$ is $\Theta(n^{j/k})$. There is also a $o(1)$ probability event of having a Berge-cycle or a connected component of size at least $k+1$ in $G(\ClauseSet)$ for $\ClauseSet\sim\Distind$. We can ignore in our 
construction of $\Distribution$ those events, by adding small probability mass to $\Distribution$ that consists of $\Distind$ conditional on any of these rare events (can be easily achieved via rejection sampling from $\Distind$). In this way, we 
only need to worry about matching expected numbers of forest configurations of size $k$ in $G(\ClauseSet)$ between $\ClauseSet\sim\Distribution$ and $\ClauseSet\sim\Distind$. We can pick the constant in $m=c\cdot n^{1-1/k}$ sufficiently large, so 
that the constant size subgraph $H$ contributes fewer trees of each type than their respective expected numbers in $G(\ClauseSet)$ for $\ClauseSet\sim\Distind$. Then, we can add a few more connected 
components that are trees of size $k$ to $\ClauseSet\sim\Distribution$, so that we match $k$-wise statistics on all tree configurations with $\ClauseSet\sim\Distind$. By having 
$H$ and a few trees of size $k$ in the graph $G(\ClauseSet)$ for $\ClauseSet\sim\Distribution$, we only utilize a constant number of variables.
Then, we would like to keep adding smaller connected components that consist of trees with strictly less than $k$ hyperedges and eventually match $k$-wise statistics on all forest-like configurations of size $k$.

It seems plausible that the approach outlined above should work and yield a $k$-clause independent distribution $\Distribution$. Apart from heavy notation that would be needed to formalize all steps in the above outline, the main technical hurdle is to ensure that statistics for all ``forest-like'' configurations of $k$ hyperedges with more than one connected component are perfectly matched with $G(\ClauseSet)$ for $\ClauseSet\sim\Distind$. 
Note, however, that even if our approach fails and there is a stronger version of Theorem~\ref{thm:lower-bound-k-wise-unsat} with $m=\omega(n^{1-1/k})$ and
$\Prx[\ClauseSet\sim\Distribution]{SAT(\ClauseSet)}\ge\frac{2}{3}$ for any $k$-clause 
independent distribution $\Distribution$, such a theorem would require a rather nontrivial argument that relies on subtle dependencies between multiple $k$-wise statistics.

\bibliographystyle{plain}
\bibliography{reference}


\end{document}